\def\grad{{\nabla}}
\newtheorem{algorithm}{Algorithm}[section]
\newcommand{\footremember}[2]{%
    \footnote{#2}
    \newcounter{#1}
    \setcounter{#1}{\value{footnote}}%
}
\tikzstyle{startstop} = [rectangle, rounded corners, minimum width=1cm, minimum height=1cm,text centered, draw=black]
\tikzstyle{io} = [trapezium, trapezium left angle=70, trapezium right angle=110, minimum width=1cm, minimum height=1cm, text centered, draw=black, fill=blue!30]
\tikzstyle{method} = [rectangle, rounded corners, minimum width=1cm, minimum height =1cm, text centered, draw=black]
\tikzstyle{process} = [rectangle, minimum width=1cm, minimum height=1cm, text centered, draw=black]
\tikzstyle{decision} = [diamond, minimum width=0.5cm, minimum height=0.5cm, text centered, draw=black, fill=green!30]
\tikzstyle{arrow} = [thick,->,>=stealth]
\newtheorem{remark}{Remark}[section]
\def\PP{{{\rm l}\kern - .15em {\rm P} }}
\def\PN2{{\PP_{N}-\PP_{N-2}}}
\newcommand{\deleted}[1]{{}}
\begin{document}
\title{High order efficient algorithm for computation of MHD flow ensembles}

\author{
 Muhammad Mohebujjaman\footremember{mit}{D\MakeLowercase{epartment of} M\MakeLowercase{athematics and} P\MakeLowercase{hysics}, T\MakeLowercase{exas} A\&M I\MakeLowercase{nternational} U\MakeLowercase{niversity}, TX 78041, USA; \MakeLowercase{\textcolor{black}{m.mohebujjaman@tamiu.edu}}}%
 \\M\MakeLowercase{assachusetts} I\MakeLowercase{nstitute of} T\MakeLowercase{echnology}}
 
\maketitle

\begin{abstract}
In this paper, we propose, analyze, and test a new fully discrete, efficient, decoupled, stable, and practically second-order time-stepping algorithm for computing MHD ensemble flow averages under uncertainties in the initial conditions and forcing. For each viscosity and magnetic diffusivity pair, the algorithm picks the largest possible parameter $\theta\in[0,1]$ to avoid the instability that arises due to the presence of some explicit viscous terms. At each time step, the algorithm shares the same system matrix with all $J$ realizations but with different right-hand-side vectors. That saves assembling time and computer memory, allows the reuse of the same preconditioner, and can take the advantage of block linear solvers. For the proposed algorithm, we prove stability and convergence rigorously. To illustrate the predicted convergence rates of our analysis, numerical experiments with manufactured solutions are given on a unit square domain. Finally, we test the scheme on a benchmark channel flow over a step problem and it performs well. 
\end{abstract}

{\bf Key words.} magnetohydrodynamics, uncertainty quantification, fast ensemble calculation, finite element method, els\"asser variables, second order scheme

\medskip
{\bf Mathematics Subject Classifications (2000)}: 65M12, 65M22, 65M60, 76W05 

\pagestyle{myheadings}
\thispagestyle{plain}
\markboth{\MakeUppercase{High order efficient algorithm for computation of MHD flow ensembles}}{\MakeUppercase{ Muhammad Mohebujjaman}}

\section{Introduction}
Numerical simulations of realistic flows are significantly affected by input data, e.g., initial conditions, boundary conditions, forcing functions, viscosities, etc, which involve uncertainties. As a result, uncertainty quantification (UQ) plays an important role in the validation of simulation methodologies and helps in developing rigorous methods to characterize the effect of the uncertainties on the final quantities of interest. A popular approach for dealing with uncertainties in the data is the computation of an ensemble average of several realizations. In many fluid dynamics applications e.g. ensemble Kalman filter approach, weather forecasting, and sensitivity analyses of solutions \cite{CCDL05, L05, LP08, LK10,  MX06, GG11} require multiple numerical simulations of a flow subject to $J$ different input conditions (realizations), which are then used to compute means and sensitivities.

\textcolor{black}{Recently, the study of MHD flows has become important due to applications in e.g. engineering, physical science, geophysics and astrophysics \cite {BMT07,BLRY07,DS07, F08, HK09, P08}, liquid metal cooling of nuclear reactors \cite{BCL91,H06,SMBM10}, process metallurgy \cite{D01, SM09}, and MHD propulsion\cite{lin1990sea, MG88}.} For the time dependent, viscous and incompressible magnetohydrodynamic (MHD) flow simulations, this leads to solving the following $J$ separate nonlinearly coupled systems of PDEs \cite{B03, D01, LL60, MR17}:
\begin{eqnarray}
u_{j,t}+u_j\cdot\nabla u_j-sB_j\cdot\nabla B_j-\nu \Delta u_j+\nabla p_j &= & f_j(x,t), \hspace{2mm}\text{in}\hspace{2mm}\Omega \times (0,T], \label{gov1}\\
B_{j,t}+u_j\cdot\nabla B_j-B_j\cdot\nabla u_j-\nu_m\Delta B_j+\nabla\lambda_j &= & \nabla\times g_j(x,t)\hspace{2mm}\text{in}\hspace{2mm}\Omega \times (0,T],\label{gov3}\\
\nabla\cdot u_j & =& 0, \hspace{2mm}\text{in}\hspace{2mm}\Omega \times (0,T], \\
\nabla\cdot B_j &=& 0, \hspace{2mm}\text{in}\hspace{2mm}\Omega \times (0,T],\\ 
u_j(x,0)& =& u_j^0(x)\hspace{2mm}\text{in}\hspace{2mm}\Omega,\label{gov2}\\
B_j(x,0)& =& B_j^0(x)\hspace{2mm}\text{in}\hspace{2mm}\Omega.\label{gov5}
\end{eqnarray}
\textcolor{black}{Here,} $u_j$, $B_j$, $p_j$, and $\lambda_j$ denote the velocity, magnetic field, pressure, and artificial magnetic pressure solutions, respectively, of the $j$-{th} member of the ensemble with slightly different initial conditions $u_j^0$ and $B_j^0$, and forcing functions $f_j$ and $\nabla\times g_j$ for all $j=1,2,\cdots, J$. The $\Omega\subset\mathbb{R}^d(d=2\hspace{1mm}\text{or}\hspace{1mm}3)$ is the convex domain, $\nu$ is the kinematic viscosity, $\nu_m$ is the magnetic \textcolor{black}{diffusivity}, $s$ is the coupling number, and $T$ is the simulation time. The artificial magnetic pressure $\lambda_j$ are Lagrange multipliers introduced in the induction equations to enforce divergence free constraints on the discrete induction equations but in continuous case $\lambda_j=0$. All the variables above are dimensionless. \textcolor{black}{The magnetic diffusivity $\nu_m$ is defined by $\nu_m:=Re_m^{-1}=1/(\mu_0\sigma)$, where $\mu_0$ is the magnetic permeability of free space and $\sigma$ is the electric conductivity of the fluid.} For the sake of simplicity of our analysis, we consider homogeneous Dirichlet boundary conditions for both velocity and magnetic fields. For periodic boundary conditions or inhomogeneous Dirichlet boundary conditions, our analyses and results will still work after minor modifications.

 To obtain an accurate, even \textcolor{black}{classical} Navier Stokes (NSE) simulation for a single member of the ensemble, the required number of degrees of freedom (dofs) is very high, which is known from Kolmogorov’s 1941 results \cite{L08}.
 Thus, for a single member of MHD ensemble simulation, where velocity and magnetic fields are nonlinearly coupled, is computationally very expensive with respect to time and memory. As a result, the computational cost of the above \textcolor{black}{coupled} system \eqref{gov1}-\eqref{gov5} will be approximately equal to $J\times$(cost of one MHD simulation) and will generally be computationally infeasible.  Our objective in this paper is to \textcolor{black}{construct} and study an efficient and accurate algorithm for solving the above ensemble systems. It has been shown in recent works \cite{AKMR15, HMR17, MR17,T14} that \textcolor{black}{by} using Els\"asser variables formulation, efficient stable decoupled MHD simulation algorithms can be created. That is, at each time step, instead of solving a fully coupled linear system, two separate Oseen-type problems need to be solved.
  
Defining $v_j:=u_j+\sqrt{s} B_j$, $w_j:=u_j-\sqrt{s}B_j$, $f_{1,j}:=f_{j}+\sqrt{s}\nabla\times g_j$, $f_{2,j}:=f_j-\sqrt{s}\nabla\times g_j$, $q_j:=p_j+\sqrt{s}\lambda_j$ and $r_j:=p_j-\sqrt{s}\lambda_j$ produces the Els{\"{a}}sser variable formulation of the ensemble systems:
\begin{eqnarray}
v_{j,t}+w_j\cdot\nabla v_j-\frac{\nu+\nu_m}{2}\Delta v_j-\frac{\nu-\nu_m}{2}\Delta w_j+\nabla q_j=f_{1,j},\label{els1}\\
w_{j,t}+v_j\cdot\nabla w_j-\frac{\nu+\nu_m}{2}\Delta w_j-\frac{\nu-\nu_m}{2}\Delta v_j+\nabla r_j=f_{2,j},\label{els2}\\
\nabla\cdot v_j=\nabla\cdot w_j=0,\label{els3}
\end{eqnarray}
together with the initial and boundary conditions.

To reduce the ensemble simulation cost, a \textcolor{black}{breakthrough} idea was presented in \cite{JL14} to find a set of $J$ solutions of the NSEs for different initial conditions and body forces. The fundamental idea is that, at each time step, each of the $J$ systems shares a common coefficient matrix, but the right-hand vectors are different.  Thus, \textcolor{black}{the global system matrix needs to be assembled and} the preconditioner needs to \textcolor{black}{be built} only once per time step, and can reuse for all $J$ systems. Also, the algorithm can save storage requirements and take advantage of block linear solvers. This breakthrough idea has been implemented in heat conduction \cite{F17}, Navier-Stokes simulations \cite{J15,jiang2017second,JL15,NTW16}, magnetohydrodynamics \cite{jiang2018efficient,MR17}, parameterized flow problems \cite{GJW18}, and turbulence modeling \cite{JKL15}. In our earlier works in \cite{M17,MR17}, we adopted the idea and considered the first-order scheme with a stabilization term to compute MHD flow ensemble average subject to different initial conditions and forcing functions. In which the optimal convergence in 2D obtained under a mild criteria but in 3D the convergence was proven to be suboptimal. \textcolor{black}{The objective of this paper, is to improve the temporal accuracy of the MHD ensemble scheme.}

\textcolor{black}{It has been shown that in Els\"asser formulation simulations, the second-order approximation of certain viscous terms causes instability unless there is an unusual data restriction $1/2<\nu/\nu_m<2$ \cite{li2018partitioned}.  To overcome this issue, a practically second order $\theta-$scheme is proposed in \cite{HMR17}, where a convex combination of the first and second order approximations of such a viscous term is taken.  We extend this idea together with the efficient ensemble algorithm described above to propose a practically second order timestepping scheme for MHD flow ensemble simulation.}

We consider a uniform timestep size $\Delta t$ and let $t_n=n\Delta t$ for $n=0, 1, \cdots$., for simplicity, we suppress the spatial discretization momentarily. Then computing the $J$ solutions independently, takes the following form:\\ 
Step 1: For $j$=1,...,$J$, 
\begin{align}
\frac{3v_j^{n+1}-4v_j^n+v_j^{n-1}}{2\Delta t}+<w>^n\cdot\nabla v_j^{n+1}+w_j^{'n}\cdot\nabla(2v_j^n-v_j^{n-1})&-\frac{\nu+\nu_m}{2}\Delta v_j^{n+1}\nonumber\\-\theta\frac{\nu-\nu_m}{2}\Delta(2w_j^n-w_j^{n-1})-(1-\theta)\frac{\nu-\nu_m}{2}\Delta w_j^n  +\nabla q_j^{n+1} &= f_{1,j}(t^{n+1}),\label{scheme1}\\\nabla\cdot v_j^{n+1}&=0.\label{incom1}
\end{align}
Step 2: For $j$=1,...,$J$,
\begin{align}
\frac{3w_j^{n+1}-4w_j^n+w_j^{n-1}}{2\Delta t}+<v>^n\cdot\nabla w_j^{n+1}+v_j^{{'}n}\cdot\nabla(2 w_j^n-w_j^{n-1}&)-\frac{\nu+\nu_m}{2}\Delta w_j^{n+1}\nonumber\\-\theta\frac{\nu-\nu_m}{2}\Delta(2v_j^n-v_j^{n-1})-(1-\theta)\frac{\nu-\nu_m}{2}\Delta v_j^n+\nabla r_j^{n+1}  &=f_{2,j}(t^{n+1}),\label{max2}\\\nabla\cdot w_j^{n+1}&=0.\label{scheme2}
\end{align}
Where $v_j^n,w_j^{n}, q_j^n$, and $r_j^{n}$ denote approximations of $v_j(\cdot,t^n),w_j(\cdot,t^n), q_j(\cdot,t^n)$, and $r_j(\cdot,t^n)$, respectively.  \textcolor{black}{The ensemble mean and fluctuation about the mean are denoted by $<u>$, and $u_j^{'}$ respectively, and they are defined as follows: 
\begin{eqnarray}
<u>^n:=\frac{1}{J}\sum\limits_{j=1}^{J}(2u_j^n-u_j^{n-1}), \hspace{2mm} u_j^{'n}:=2u_j^n-u_{j}^{n-1}-<u>^n.\label{ensemble_def}
\end{eqnarray}
}

\noindent We prove that the above method is stable for any $\nu$ and $\nu_m$, provided $\theta$ is chosen to satisfy $\frac{\theta}{1+\theta}<\frac{\nu}{\nu_m}<\frac{1+\theta}{\theta}$, $0\le\theta\le 1$. We also prove that the temporal accuracy of the scheme is of $O(\Delta t^2+(1-\theta)|\nu-\nu_m|\Delta t)$. Though, it looks like the scheme is first order accurate in time but in many practical applications the factor $|\nu-\nu_m|\ll 1$, e.g. in the Earth's core the current estimates suggest  $\nu\sim 10^{-8}$, $\nu_m\sim 10^{-3}$ \cite{jones2015thermal,olson2013experimental}, and $|\nu-\nu_m|$ is in the order of $10^{-3}$. \textcolor{black}{Likewise, the Sun has $\nu_m\sim 10^{-6}$. Moreover, following the discovery of high-temperature superconductor, the theory suggests that there is a possibility of discovering high-temperature liquid superconductor \cite{edwards2006possibility}, where $\nu_m\sim 10^{-10}$.} Thus, in such cases, $\Delta t^2$ dominates over $(1-\theta)|\nu-\nu_m|\Delta t$ and the scheme behaves like second order accurate in time. \textcolor{black}{In fact, when $|\nu-\nu_m|\ll 1$ with a high Reynolds number flow of high electric conductive fluids, the flow becomes convective dominated. In convective dominated regimes, the contribution of the non-linear terms in the system matrix dominates over the contribution of viscous terms. Thus, the system matrix becomes highly ill-conditioned and the system becomes harder for the solver to solve. Therefore, it is critical to find a robust algorithm for high Reynolds number flow of high electric conductive fluids.}

The key features to the efficiency of the above algorithm: (1) The MHD system is decoupled \textcolor{black}{in a stable way} into two \textcolor{black}{identical} Oseen problems and can be solved simultaneously if the computational resources are available. (2) \textcolor{black}{At each time step,} the coefficient matrices of \eqref{scheme1}-\eqref{incom1} and \eqref{max2}-\eqref{scheme2} are independent of $j$. Thus, for each sub-problem, all the $J$ members in the ensemble share the same coefficient matrix. \textcolor{black}{That is, at each time step, instead of solving $J$ individual systems, we only need to solve a single linear system with $J$ different right-hand-side constant vectors.} (3) \textcolor{black}{It provides second-order temporal accuracy when $|\nu-\nu_m|\ll 1$. (4) No data restriction is needed on $\nu$, and $\nu_m$ to avoid instability.}

We give rigorous proofs that the decoupled scheme is conditionally stable and the ensemble average of the $J$ different computed solutions converges to the ensemble average of the $J$ different true solutions, as the timestep size and the spatial mesh width tend to zero. \textcolor{black}{To the best of our knowledge, this second order timestepping scheme to the MHD flow ensemble averages is new.}

This paper is organized as follows.  Section 2 presents notation and mathematical preliminaries which are necessary for a smooth presentation and analysis to follow. In section 3, we present and analyze a fully discrete and decoupled algorithm corresponding to \eqref{scheme1}-\eqref{scheme2}, and prove it's \textcolor{black}{stability and convergent theorems.} Numerical tests are presented in section 4, and finally, conclusions \textcolor{black}{and future directions} are given in section 5.
\section{\large Notation and preliminaries}

Let $\Omega\subset \mathbb{R}^d\ (d=2,3)$ be a convex polygonal or polyhedral domain in $\mathbb{R}^d(d=2,3)$ with boundary $\partial\Omega$. The usual $L^2(\Omega)$ norm and inner product are denoted by $\|.\|$ and $(.,.)$, respectively. Similarly, the $L^p(\Omega)$ norms and the Sobolev $W_p^k(\Omega)$ norms are $\|.\|_{L^p}$ and $\|.\|_{W_p^k}$, respectively for $k\in\mathbb{N},\hspace{1mm}1\le p\le \infty$. Sobolev space $W_2^k(\Omega)$ is represented by $H^k(\Omega)$ with norm $\|.\|_k$. The natural function spaces for our problem are
$$X:=H_0^1(\Omega)=\{v\in (L^p(\Omega))^d :\nabla v\in L^2(\Omega)^{d\times d}, v=0 \hspace{2mm} \mbox{on}\hspace{2mm}   \partial \Omega\},$$
$$Q:=L_0^2(\Omega)=\{ q\in L^2(\Omega): \int_\Omega q\hspace{2mm}dx=0\}.$$
Recall the Poincare inequality holds in $X$: There exists $C$ depending only on $\Omega$ satisfying for all $\phi\in X$,
\[
\| \phi \| \le C \| \nabla \phi \|.
\]
The divergence free velocity space is given by
$$V:=\{v\in X:(\nabla\cdot v, q)=0, \forall q\in Q\}.$$
We define the trilinear form $b:X\times X\times X\rightarrow \mathbb{R}$ by
 \[
 b(u,v,w):=(u\cdot\nabla v,w), 
 \]
and recall from \cite{GR86} that $b(u,v,v)=0$ if $u\in V$, and 
\begin{align}
|b(u,v,w)|\leq C(\Omega)\|\nabla u\|\|\nabla v\|\|\nabla w\|,\hspace{2mm}\mbox{for any}\hspace{2mm}u,v,w\in X.\label{nonlinearbound}
\end{align}

The conforming finite element spaces are denoted by $X_h\subset X$ and  $Q_h\subset Q$, and we assume a regular triangulation $\tau_h(\Omega)$, where $h$ is the maximum triangle diameter.   We assume that $(X_h,Q_h)$ satisfies the usual discrete inf-sup condition
\begin{eqnarray}
\inf_{q_h\in Q_h}\sup_{v_h\in X_h}\frac{(q_h,\grad\cdot v_h)}{\|q_h\|\|\grad v_h\|}\geq\beta>0,\label{infsup}
\end{eqnarray}
where $\beta$ is independent of $h$.

The space of discretely divergence free functions is defined as $$V_h:=\{v_h\in X_h:(\nabla\cdot v_h,q_h)=0,\hspace{2mm}\forall q_h\in Q_h\}.$$

For simplicity of our analysis, we will use Scott-Vogelius (SV) \cite{scott1985conforming} finite element pair $(X_h, Q_h)=((P_k)^d, P_{k-1}^{disc})$,  which satisfies the \textit{inf-sup} condition when the mesh is created as a barycenter refinement of a regular mesh, and the polynomial degree $k\ge d$  \cite{arnold:qin:scott:vogelius:2D,Z05}. Our analysis can be extended without difficulty to any inf-sup stable element choice, {\color{black} however, there will be additional terms that appear in the convergence analysis if non-divergence-free elements are chosen.}

We have the following approximation properties in $(X_h,Q_h)$: \cite{BS08}
\begin{align}
\inf_{v_h\in X_h}\|u-v_h\|&\leq Ch^{k+1}|u|_{k+1},\hspace{2mm}u\in H^{k+1}(\Omega),\label{AppPro1}\\
 \inf_{v_h\in X_h}\|\grad (u-v_h)\|&\leq Ch^{k}|u|_{k+1},\hspace{5mm}u\in H^{k+1}(\Omega),\label{AppPro2}\\
\inf_{q_h\in Q_h}\|p-q_h\|&\leq Ch^k|p|_k,\hspace{10mm}p\in H^k(\Omega),
\end{align}
where $|\cdot|_r$ denotes the $H^r$ seminorm.

We will assume the mesh is sufficiently regular for the inverse inequality to hold, and with this and the LBB assumption, we have approximation properties
\begin{align}
\| \nabla ( u- P_{L^2}^{V_h}(u)  ) \|&\leq Ch^{k}|u|_{k+1},\hspace{2mm}u\in H^{k+1}(\Omega),\label{AppPro3}\\
 \inf_{v_h\in V_h}\|\grad (u-v_h)\|&\leq Ch^{k}|u|_{k+1},\hspace{2mm}u\in H^{k+1}(\Omega),\label{AppPro4}
\end{align}
where $P_{L^2}^{V_h}(u)$ is the $L^2$ projection of $u$ into $V_h$.

\textcolor{black}{The following lemma for the discrete Gronwall inequality was given in \cite{HR90}.
\begin{lemma}
Let $\Delta t$, $H$, $a_n$, $b_n$, $c_n$, $d_n$ be non-negative numbers for $n=1,\cdots, M$ such that
    $$a_M+\Delta t \sum_{n=1}^Mb_n\leq \Delta t\sum_{n=1}^{M-1}{d_na_n}+\Delta t\sum_{n=1}^Mc_n+H\hspace{3mm}\mbox{for}\hspace{2mm}M\in\mathbb{N},$$
then for all $\Delta t> 0,$
$$a_M+\Delta t\sum_{n=1}^Mb_n\leq \mbox{exp}\left(\Delta t\sum_{n=1}^{M-1}d_n\right)\left(\Delta t\sum_{n=1}^Mc_n+H\right)\hspace{2mm}\mbox{for}\hspace{2mm}M\in\mathbb{N}.$$
\end{lemma}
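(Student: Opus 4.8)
The plan is to prove the lemma by a recursive (telescoping) argument that exploits the crucial structural feature of the hypothesis: the double sum on the right-hand side runs only up to $M-1$, so $a_M$ itself never appears on the right multiplied by $\Delta t\, d_M$. This is precisely why no smallness restriction on $\Delta t$ is needed, and keeping the recursion ``strictly lower-triangular'' in the indices is the one point that requires care.

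First I would observe that, since all the $c_n$ and $H$ are non-negative, the quantity $G:=\Delta t\sum_{n=1}^{M}c_n+H$ dominates $\Delta t\sum_{n=1}^{m}c_n+H$ for every $m\le M$. Applying the hypothesis with $M$ replaced by each $m\le M$, and discarding the non-negative term $\Delta t\sum_{n=1}^{m}b_n$ on the left, then yields the recursion
$$a_m\le \Delta t\sum_{n=1}^{m-1}d_n a_n+G,\qquad 1\le m\le M.$$

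Next I would show, by induction on $m$, that $a_m\le G\prod_{n=1}^{m-1}(1+\Delta t\, d_n)$, with the empty product equal to $1$. The base case $m=1$ is immediate because the sum is empty. For the inductive step, inserting the bounds for $a_1,\dots,a_{m-1}$ into the recursion and using the telescoping identity
$$\Delta t\, d_j\prod_{n=1}^{j-1}(1+\Delta t\, d_n)=\prod_{n=1}^{j}(1+\Delta t\, d_n)-\prod_{n=1}^{j-1}(1+\Delta t\, d_n)$$
collapses the sum, leaving $a_m\le G\bigl(\prod_{n=1}^{m-1}(1+\Delta t\, d_n)-1\bigr)+G=G\prod_{n=1}^{m-1}(1+\Delta t\, d_n)$.

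Finally I would return to the original hypothesis at level $M$ — this time retaining the $b_n$ term — substitute the just-proved bound for each $a_n$ with $n\le M-1$, telescope once more, and conclude
$$a_M+\Delta t\sum_{n=1}^{M}b_n\le G\prod_{n=1}^{M-1}(1+\Delta t\, d_n).$$
The elementary inequality $1+x\le e^x$ for $x\ge 0$ then upgrades the product to $\exp\bigl(\Delta t\sum_{n=1}^{M-1}d_n\bigr)$, which is the asserted estimate. There is no genuine obstacle here — this is the classical discrete Gronwall lemma of Heywood and Rannacher — so the ``hard part'' is merely the index bookkeeping in Steps~1--2 that makes the telescoping work for arbitrary $\Delta t>0$.
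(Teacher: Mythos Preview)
Your argument is correct and is essentially the standard proof of the discrete Gronwall inequality. Note, however, that the paper does not actually supply a proof of this lemma: it merely states the result and cites Heywood and Rannacher \cite{HR90}. Your telescoping induction (show $a_m\le G\prod_{n=1}^{m-1}(1+\Delta t\,d_n)$, then feed this back into the level-$M$ hypothesis and bound the product by the exponential) is precisely the classical argument behind that reference, so there is nothing to contrast.
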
}

\section{\large Fully discrete scheme and analysis}

Now we present and analyze an efficient, fully discrete, decoupled, and \textcolor{black}{practically} second-order \textcolor{black}{timestepping} scheme for computing MHD flow ensemble. Like other BDF2 schemes, two initial conditions should be known; if the first initial conditions are known, using the linearized backward Euler scheme in \cite{MR17} without the ensemble eddy viscosity terms on the first timestep, we can get the second initial condition without affecting the stability or accuracy. The scheme is defined below.

\begin{algorithm}\label{Algn1}
Given $\nu$ and $\nu_m$, choose $\theta$ sufficiently large so that $\frac{\theta}{1+\theta}<\frac{\nu}{\nu_m}<\frac{1+\theta}{\theta}$, $0\le\theta\le 1$.
Given time step $\Delta t>0$, end time $T>0$, initial conditions $v_j^0, w_j^0, v_j^1, w_j^1\in V_h$ and $f_{1,j}, f_{2,j}\in L^\infty(0,T;H^{-1}(\Omega)^d)$ for $j=1,2,\cdots, J$. Set $M=T/\Delta t$ and for $n=1,\cdots, M-1$, compute: \\\\
Find $v_{j,h}^{n+1}\in V_h$ satisfying, for all $\chi_h\in V_h$:
\begin{align}
\Bigg(\frac{3v_{j,h}^{n+1}-4v_{j,h}^n+v_{j,h}^{n-1}}{2\Delta t},\chi_h\Bigg)+b^*(<w_h>^n, v_{j,h}^{n+1},\chi_h)+b^*(w_{j,h}^{'n}, 2v_{j,h}^n-v_{j,h}^{n-1},\chi_h)\nonumber\\+\frac{\nu+\nu_m}{2}(\nabla v_{j,h}^{n+1},\nabla \chi_h)+\frac{\nu-\nu_m}{2}((1-\theta)\nabla w_{j,h}^n+\theta\nabla (2w_{j,h}^n-w_{j,h}^{n-1}),\nabla \chi_h)= (f_{1,j}(t^{n+1}),\chi_h),\label{weaknew1}
\end{align}
Find $w_{j,h}^{n+1}\in V_h$ satisfying, for all $l_h\in V_h$:

\begin{eqnarray}
\left(\frac{3w_{j,h}^{n+1}-4w_{j,h}^n+w_{j,h}^{n-1}}{2\Delta t},l_h\right)+b^*(<v_h>^n, w_{j,h}^{n+1},l_h)+b^*(v_{j,h}^{'n},2w_{j,h}^n-w_{j,h}^{n-1},l_h)\nonumber\\+\frac{\nu+\nu_m}{2}(\nabla w_{j,h}^{n+1},\nabla l_h)+\frac{\nu-\nu_m}{2}((1-\theta)\nabla v_{j,h}^n+\theta\nabla (2v_{j,h}^n-v_{j,h}^{n-1}),\nabla l_h)=(f_{2,j}(t^{n+1}),l_h).\label{weaknew2}
\end{eqnarray}
\end{algorithm}

\subsection{\large Stability analysis}
We now prove stability and well-posedness for the Algorithm \ref{Algn1}. To simplify our calculation, we define $\alpha:=\nu+\nu_m-|\nu-\nu_m|(1+2\theta)>0$, which allow us to choose sufficiently large $\theta$ so that
\begin{align}
    \frac{\theta}{1+\theta}<\frac{\nu}{\nu_m}<\frac{1+\theta}{\theta}, \hspace{2mm} 0\le\theta\le 1,\label{theta_con}
\end{align}
holds.
\begin{lemma}
Consider the Algorithm \ref{Algn1}. If the mesh is sufficiently regular so that the inverse inequality holds and the timestep is chosen to satisfy $$\Delta t\le\frac{\alpha h^2}{C\max\limits_{1\le j\le J}\big\{\|\nabla v_{j,h}^{'n}\|^2,\hspace{1mm}\|\nabla w_{j,h}^{'n}\|^2\big\}}$$
then the method is stable and solutions to \eqref{weaknew1}-\eqref{weaknew2} satisfy

\begin{align}
    \|v_{j,h}^M\|^2+\|2v_{j,h}^M-v_{j,h}^{M-1}\|^2&+\|w_{j,h}^M\|^2+\|2w_{j,h}^M-w_{j,h}^{M-1}\|^2+\alpha\Delta t\sum_{n=2}^M\bigg(\|\nabla v_{j,h}^n\|^2+\|\nabla w_{j,h}^n\|^2\bigg)\nonumber\\&\le\|v_{j,h}^1\|^2+\|w_{j,h}^1\|^2+\|2v_{j,h}^1-v_{j,h}^{0}\|^2+\|2w_{j,h}^1-w_{j,h}^{0}\|^2\nonumber\\&+(\nu+\nu_m)\Delta t\bigg(\|\nabla v_{j,h}^0\|^2+\|\nabla w_{j,h}^0\|^2+\|\nabla v_{j,h}^1\|^2+\|\nabla w_{j,h}^1\|^2\bigg)\nonumber\\&+\textcolor{red}{\frac{8}{\alpha}\sum_{n=1}^{M-1}\left(\|f_{1,j}(t^{n+1})\|_{-1}^2+\|f_{2,j}(t^{n+1})\|_{-1}^2\right)}.
\end{align}
\end{lemma}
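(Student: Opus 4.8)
The plan is the standard energy argument for BDF2 ensemble schemes, adapted to carry the $\theta$-weighted explicit viscous term. First I would test \eqref{weaknew1} with $\chi_h = 2\Delta t\,(2v_{j,h}^{n+1} - v_{j,h}^n)$ — the classical BDF2 multiplier — and \eqref{weaknew2} with the analogous $l_h = 2\Delta t\,(2w_{j,h}^{n+1}-w_{j,h}^n)$. For the time-derivative term this produces the telescoping identity
\[
\big(3v_{j,h}^{n+1}-4v_{j,h}^n+v_{j,h}^{n-1},\,2v_{j,h}^{n+1}-v_{j,h}^n\big) = \|v_{j,h}^{n+1}\|^2 + \|2v_{j,h}^{n+1}-v_{j,h}^n\|^2 - \|v_{j,h}^n\|^2 - \|2v_{j,h}^n-v_{j,h}^{n-1}\|^2 + \|v_{j,h}^{n+1}-2v_{j,h}^n+v_{j,h}^{n-1}\|^2,
\]
which is where the quantities in the claimed bound come from. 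The nonlinear terms: $b^*(\langle w_h\rangle^n, v_{j,h}^{n+1}, v_{j,h}^{n+1})=0$ by skew-symmetry, so the $b^*$ involving the mean and the implicit argument largely vanishes against the leading part of the multiplier; the fluctuation term $b^*(w_{j,h}^{'n}, 2v_{j,h}^n - v_{j,h}^{n-1}, \cdot)$ is the only genuinely troublesome advection term and must be bounded using \eqref{nonlinearbound}, the inverse inequality, and then absorbed — this is exactly where the CFL-type restriction $\Delta t \le \alpha h^2 / (C\max_j\{\|\nabla v_{j,h}^{'n}\|^2,\|\nabla w_{j,h}^{'n}\|^2\})$ enters, since one needs a factor of $h^{-2}$ from the inverse inequality to be tamed by $\Delta t$ against the $\alpha\|\nabla v_{j,h}^{n+1}\|^2$ dissipation.

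The delicate point is the viscous block. Summing the two viscous contributions from \eqref{weaknew1}–\eqref{weaknew2}, one gets $\frac{\nu+\nu_m}{2}$ times a coercive term in $\nabla v_{j,h}^{n+1}$ and $\nabla w_{j,h}^{n+1}$ plus a cross term $\frac{\nu-\nu_m}{2}$ coupling the explicit combinations $(1-\theta)\nabla w_{j,h}^n + \theta\nabla(2w_{j,h}^n - w_{j,h}^{n-1})$ against $\nabla(2v_{j,h}^{n+1}-v_{j,h}^n)$ and its mirror image. I would expand the multiplier $2v_{j,h}^{n+1}-v_{j,h}^n = v_{j,h}^{n+1} + (v_{j,h}^{n+1}-v_{j,h}^n)$, estimate every resulting mixed gradient product by Cauchy–Schwarz and Young's inequality with carefully chosen weights, and check that the symmetric negative-definite part dominates: the net coefficient in front of $\|\nabla v_{j,h}^{n+1}\|^2 + \|\nabla w_{j,h}^{n+1}\|^2$ after all the Young splittings should come out to exactly $\alpha\Delta t = \big(\nu+\nu_m - |\nu-\nu_m|(1+2\theta)\big)\Delta t$, which is positive precisely under the hypothesis \eqref{theta_con} on $\theta$. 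The terms $v_{j,h}^{n+1}-v_{j,h}^n$ and $v_{j,h}^{n+1}-2v_{j,h}^n+v_{j,h}^{n-1}$ generated here are controlled against the positive $\|v_{j,h}^{n+1}-2v_{j,h}^n+v_{j,h}^{n-1}\|^2$ from the time-derivative telescoping and against the dissipation; this bookkeeping, tracking exactly how much of the $\frac{\nu+\nu_m}{2}$ coercivity is consumed, is the main obstacle and must be done so that the leftover viscous damping is $\alpha$ and not something smaller.

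Next I would handle the forcing: $(f_{1,j}(t^{n+1}), 2v_{j,h}^{n+1}-v_{j,h}^n)$ is bounded by $\|f_{1,j}(t^{n+1})\|_{-1}\|\nabla(2v_{j,h}^{n+1}-v_{j,h}^n)\|$, then Young's inequality with weight tuned to leave room inside the $\alpha\Delta t\|\nabla v_{j,h}^{n+1}\|^2$ budget, producing the $\frac{8}{\alpha}\|f_{1,j}(t^{n+1})\|_{-1}^2$ contribution (the constant $8$ absorbing the expansion of the multiplier into $v_{j,h}^{n+1}$ and $v_{j,h}^{n+1}-v_{j,h}^n$ pieces). After collecting everything, one arrives at an inequality of the form
\[
E^{n+1} - E^n + \alpha\Delta t\big(\|\nabla v_{j,h}^{n+1}\|^2 + \|\nabla w_{j,h}^{n+1}\|^2\big) \le \frac{8}{\alpha}\big(\|f_{1,j}(t^{n+1})\|_{-1}^2 + \|f_{2,j}(t^{n+1})\|_{-1}^2\big),
\]
where $E^n := \|v_{j,h}^n\|^2 + \|2v_{j,h}^n - v_{j,h}^{n-1}\|^2 + \|w_{j,h}^n\|^2 + \|2w_{j,h}^n - w_{j,h}^{n-1}\|^2$. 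Summing from $n=1$ to $M-1$ telescopes the energy, and the initial viscous terms $(\nu+\nu_m)\Delta t(\|\nabla v_{j,h}^0\|^2 + \cdots)$ on the right-hand side appear because the $n=1$ step of the explicit viscous term references $v_{j,h}^0, v_{j,h}^1$ and must be moved to the right before the telescoping stabilizes. Finally, well-posedness follows since \eqref{weaknew1}–\eqref{weaknew2} are finite-dimensional square linear systems whose associated homogeneous problem has only the zero solution by the same energy estimate with zero data; I would state this as a short corollary of the coercivity established above.
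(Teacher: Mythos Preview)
Your choice of test function is wrong, and this is not a cosmetic issue: it breaks the argument in two places. The telescoping identity you wrote,
\[
\big(3a-4b+c,\,2a-b\big) \;=\; \|a\|^2 + \|2a-b\|^2 - \|b\|^2 - \|2b-c\|^2 + \|a-2b+c\|^2,
\]
is false. Expand both sides: the left is $6\|a\|^2 - 11(a,b) + 4\|b\|^2 + 2(a,c) - (b,c)$, while the right is $6\|a\|^2 - 8(a,b) + 2(a,c)$. The right-hand side you wrote is in fact $(3a-4b+c,\,2a)$, i.e.\ the classical BDF2 $G$-stability identity obtained by testing with $v_{j,h}^{n+1}$ (up to the factor $2$), not with $2v_{j,h}^{n+1}-v_{j,h}^n$. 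The paper tests \eqref{weaknew1} with $\chi_h=v_{j,h}^{n+1}$ and uses exactly $(3a-4b+c,a)=\tfrac12\big(\|a\|^2+\|2a-b\|^2-\|b\|^2-\|2b-c\|^2+\|a-2b+c\|^2\big)$.

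The wrong multiplier also destroys the key cancellation in the ensemble nonlinear term. With $\chi_h=v_{j,h}^{n+1}$ one has $b^*(\langle w_h\rangle^n,v_{j,h}^{n+1},v_{j,h}^{n+1})=0$ exactly, so the mean convection disappears and only the fluctuation term $b^*(w_{j,h}^{\prime n},2v_{j,h}^n-v_{j,h}^{n-1},v_{j,h}^{n+1})$ survives. The paper then rewrites this via skew-symmetry as $-b^*(w_{j,h}^{\prime n},v_{j,h}^{n+1},v_{j,h}^{n+1}-2v_{j,h}^n+v_{j,h}^{n-1})$, bounds it with \eqref{nonlinearbound} and the inverse inequality, and absorbs it using the positive $\|v_{j,h}^{n+1}-2v_{j,h}^n+v_{j,h}^{n-1}\|^2$ from the $G$-stability identity under the stated timestep restriction. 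With your multiplier $2v_{j,h}^{n+1}-v_{j,h}^n$, the residual $b^*(\langle w_h\rangle^n,v_{j,h}^{n+1},-v_{j,h}^n)$ involves the ensemble mean $\langle w_h\rangle^n$, which couples all $J$ realizations and cannot be controlled within a single-$j$ estimate; this would be fatal. Likewise the implicit viscous term $(\nabla v_{j,h}^{n+1},\nabla(2v_{j,h}^{n+1}-v_{j,h}^n))$ is no longer sign-definite, so you would lose clean coercivity before even reaching the $\theta$-bookkeeping. Once you switch to $\chi_h=v_{j,h}^{n+1}$, the viscous cross terms are handled by straightforward Cauchy--Schwarz/Young splittings (no need to decompose the multiplier), yielding the coefficient $\alpha=\nu+\nu_m-|\nu-\nu_m|(1+2\theta)$ directly; the rest of your outline (forcing via $\|\cdot\|_{-1}$ and Young with weight $\alpha/8$, then telescoping over $n$) matches the paper.
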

\textcolor{black}{\begin{remark}
The timestep restriction arises due to the use of inverse inequality, where the constant $C$ depends on the inverse inequality constant. The above stability bound is sufficient for the well-posedness of the Algorithm \ref{Algn1}, since it is linear at each timestep and finite dimensional. The linearity of the scheme also provides the uniqueness of the solution, and the uniqueness implies existence. Thus the solutions to the Algorithm \ref{Algn1} exist uniquely.
\end{remark}}
\vspace{2mm}
\begin{proof}
Choose $\chi_h=v_{j,h}^{n+1}$ and $l_h=w_{j,h}^{n+1}$ in \eqref{weaknew1}-\eqref{weaknew2},
\begin{align}
\Bigg(\frac{3v_{j,h}^{n+1}-4v_{j,h}^n+v_{j,h}^{n-1}}{2\Delta t},v_{j,h}^{n+1}\Bigg)&+\frac{\nu+\nu_m}{2}\|\nabla v_{j,h}^{n+1}\|^2+\frac{\nu-\nu_m}{2}((1+\theta)\nabla w_{j,h}^n-\theta\nabla w_{j,h}^{n-1},\nabla v_{j,h}^{n+1})\nonumber\\&+(w_{j,h}^{'n}\cdot\nabla (2v_{j,h}^n-v_{j,h}^{n-1}),v_{j,h}^{n+1}) = (f_{1,j}(t^{n+1}),v_{j,h}^{n+1}),\label{weakn1}
\end{align}
and
\begin{align}
\Bigg(\frac{3w_{j,h}^{n+1}-4w_{j,h}^n+w_{j,h}^{n-1}}{2\Delta t},w_{j,h}^{n+1}\Bigg)+&\frac{\nu+\nu_m}{2}\|\nabla w_{j,h}^{n+1}\|^2+\frac{\nu-\nu_m}{2}((1+\theta)\nabla v_{j,h}^n-\theta\nabla v_{j,h}^{n-1},\nabla w_{j,h}^{n+1})\nonumber\\&+(v_{j,h}^{'n}\cdot\nabla (2w_{j,h}^n-w_{j,h}^{n-1}),w_{j,h}^{n+1}) = (f_{1,j}(t^{n+1}),w_{j,h}^{n+1}).\label{weakn2}
\end{align}

Using the following identity
\begin{eqnarray}
(3a-4b+c,a)=\frac{a^2+(2a-b)^2}{2}-\frac{b^2+(2b-c)^2}{2}+\frac{(a-2b+c)^2}{2},\label{ident}
\end{eqnarray}
we obtain
\begin{align}
\frac{1}{4\Delta t}\bigg(\|v_{j,h}^{n+1}\|^2-&\|v_{j,h}^n\|^2+\|2v_{j,h}^{n+1}-v_{j,h}^n\|^2-\|2v_{j,h}^n-v_{j,h}^{n-1}\|^2+\|v_{j,h}^{n+1}-2v_{j,h}^n+v_{j,h}^{n-1}\|^2\bigg)\nonumber\\&+\frac{\nu+\nu_m}{2}\|\nabla v_{j,h}^{n+1}\|^2+\frac{\nu-\nu_m}{2}\big((1+\theta)\nabla w_{j,h}^n-\theta\nabla w_{j,h}^{n-1},\nabla v_{j,h}^{n+1}\big)\nonumber\\&+(w_{j,h}^{'n}\cdot\nabla(2v_{j,h}^n-v_{j,h}^{n-1}),v_{j,h}^{n+1})=(f_{1,j}(t^{n+1}),v_{j,h}^{n+1}),\label{MHW1}
\end{align}
and
\begin{align}
\frac{1}{4\Delta t}\bigg(\|w_{j,h}^{n+1}\|^2-&\|w_{j,h}^n\|^2+\|2w_{j,h}^{n+1}-w_{j,h}^n\|^2-\|2w_{j,h}^n-w_{j,h}^{n-1}\|^2+\|w_{j,h}^{n+1}-2w_{j,h}^n+w_{j,h}^{n-1}\|^2\bigg)\nonumber\\&+\frac{\nu+\nu_m}{2}\|\nabla w_{j,h}^{n+1}\|^2+\frac{\nu-\nu_m}{2}\big((1+\theta)\nabla  v_{j,h}^n-\theta\nabla v_{j,h}^{n-1},\nabla w_{j,h}^{n+1}\big)\nonumber\\&+(v_{j,h}^{'n}\cdot\nabla(2w_{j,h}^n-w_{j,h}^{n-1}),w_{j,h}^{n+1})=(f_{2,j}(t^{n+1}),w_{j,h}^{n+1}).\label{MHW2}
\end{align}
Next, using
\begin{align*}
(w_{j,h}^{'n}&\cdot\nabla(2v_{j,h}^n-v_{j,h}^{n-1}),v_{j,h}^{n+1})=(w_{j,h}^{'n}\cdot\nabla v_{j,h}^{n+1},v_{j,h}^{n+1}-2v_{j,h}^n+v_{j,h}^{n-1})\\&\le C\|\nabla w_{j,h}^{'n}\|\|\nabla v_{j,h}^{n+1}\|\hspace{1mm}\|\nabla (v_{j,h}^{n+1}-2v_{j,h}^n+v_{j,h}^{n-1})\|\\&\le \frac{C}{h} \|\nabla w_{j,h}^{'n}\|\|\nabla v_{j,h}^{n+1}\|\hspace{1mm}\|v_{j,h}^{n+1}-2v_{j,h}^n+v_{j,h}^{n-1}\|,
\end{align*}
adding equations \eqref{MHW1} and \eqref{MHW2}, applying Cauchy-Schwarz and Young's inequalities to the $\nu-
\nu_m$ terms, and rearranging, yields
\begin{align*}
\frac{1}{4\Delta t}\bigg(\|v_{j,h}^{n+1}\|^2-\|v_{j,h}^n\|^2+\|2v_{j,h}^{n+1}-v_{j,h}^n\|^2-\|2v_{j,h}^n-v_{j,h}^{n-1}\|^2+\|v_{j,h}^{n+1}-2v_{j,h}^n+v_{j,h}^{n-1}\|^2\\+\|w_{j,h}^{n+1}\|^2-\|w_{j,h}^n\|^2+\|2w_{j,h}^{n+1}-w_{j,h}^n\|^2-\|2w_{j,h}^n-w_{j,h}^{n-1}\|^2+\|w_{j,h}^{n+1}-2w_{j,h}^n+w_{j,h}^{n-1}\|^2\bigg)\\+\frac{\nu+\nu_m}{2}\big(\|\nabla v_{j,h}^{n+1}\|^2+\|\nabla w_{j,h}^{n+1}\|^2\big)\le \frac{|\nu-\nu_m|}{4}(1+2\theta)\big(\|\nabla v_{j,h}^{n+1}\|^2+\|\nabla w_{j,h}^{n+1}\|^2\big)\\+\frac{|\nu-\nu_m|}{4}(1+\theta)\big(\|\nabla v_{j,h}^n\|^2+\|\nabla w_{j,h}^n\|^2\big)+\frac{|\nu-\nu_m|}{4}\theta\big(\|\nabla v_{j,h}^{n-1}\|^2+\|\nabla w_{j,h}^{n-1}\|^2\big)\\ +\frac{C}{h}\|\nabla w_{j,h}^{'n}\|\|\nabla v_{j,h}^{n+1}\|\hspace{1mm}\|v_{j,h}^{n+1}-2v_{j,h}^n+v_{j,h}^{n-1}\|+\frac{C}{h}\|\nabla v_{j,h}^{'n}\|\|\nabla w_{j,h}^{n+1}\|\hspace{1mm}\|w_{j,h}^{n+1}-2w_{j,h}^n+w_{j,h}^{n-1}\|\\+\|f_{1,j}(t^{n+1})\|_{-1}\|\nabla v_{j,h}^{n+1}\|+\|f_{2,j}(t^{n+1})\|_{-1}\|\nabla w_{j,h}^{n+1}\|.
\end{align*}
Next, we apply Young’s inequality using $\alpha/8$ with the forcing and non-linear terms, noting that $\alpha>0$ by the assumed choice of $\theta$,  and hiding terms on the left,
\begin{align*}
\frac{1}{4\Delta t}\bigg(\|v_{j,h}^{n+1}\|^2-\|v_{j,h}^n\|^2+\|2v_{j,h}^{n+1}-v_{j,h}^n\|^2-\|2v_{j,h}^n-v_{j,h}^{n-1}\|^2+\|v_{j,h}^{n+1}-2v_{j,h}^n+v_{j,h}^{n-1}\|^2\\+\|w_{j,h}^{n+1}\|^2-\|w_{j,h}^n\|^2+\|2w_{j,h}^{n+1}-w_{j,h}^n\|^2-\|2w_{j,h}^n-w_{j,h}^{n-1}\|^2+\|w_{j,h}^{n+1}-2w_{j,h}^n+w_{j,h}^{n-1}\|^2\bigg)\\+\frac{\nu+\nu_m}{4}\big(\|\nabla v_{j,h}^{n+1}\|^2+\|\nabla w_{j,h}^{n+1}\|^2\big)\le \frac{|\nu-\nu_m|}{4}(1+\theta)\big(\|\nabla v_{j,h}^n\|^2+\|\nabla w_{j,h}^n\|^2\big)
\\+\frac{|\nu-\nu_m|}{4}\theta\big(\|\nabla v_{j,h}^{n-1}\|^2+\|\nabla w_{j,h}^{n-1}\|^2\big)+\frac{2}{\alpha}\|f_{1,j}(t^{n+1})\|_{-1}^2+\frac{2}{\alpha}\|f_{2,j}(t^{n+1})\|_{-1}^2\\ +\frac{C}{\alpha h^2}\|\nabla w_{j,h}^{'n}\|^2\|v_{j,h}^{n+1}-2v_{j,h}^n+v_{j,h}^{n-1}\|^2+\frac{C}{\alpha h^2}\|\nabla v_{j,h}^{'n}\|^2\|w_{j,h}^{n+1}-2w_{j,h}^n+w_{j,h}^{n-1}\|^2.
\end{align*}

Rearranging

\begin{align}
\frac{1}{4\Delta t}\bigg(\|v_{j,h}^{n+1}\|^2-\|v_{j,h}^n\|^2+\|2v_{j,h}^{n+1}-v_{j,h}^n\|^2-\|2v_{j,h}^n-v_{j,h}^{n-1}\|^2\nonumber\\+\|w_{j,h}^{n+1}\|^2-\|w_{j,h}^n\|^2+\|2w_{j,h}^{n+1}-w_{j,h}^n\|^2-\|2w_{j,h}^n-w_{j,h}^{n-1}\|^2\bigg)\nonumber\\+\bigg(\frac{1}{4\Delta t}-\frac{C}{\alpha h^2}\|\nabla w_{j,h}^{'n}\|^2\bigg)\|v_{j,h}^{n+1}-2v_{j,h}^n+v_{j,h}^{n-1}\|^2\nonumber\\+\bigg(\frac{1}{4\Delta t}-\frac{C}{\alpha h^2}\|\nabla v_{j,h}^{'n}\|^2\bigg)\|w_{j,h}^{n+1}-2w_{j,h}^n+w_{j,h}^{n-1}\|^2\nonumber\\+\frac{\nu+\nu_m}{4}\bigg(\|\nabla v_{j,h}^{n+1}\|^2-\|\nabla v_{j,h}^{n}\|^2+\|\nabla w_{j,h}^{n+1}\|^2-\|\nabla w_{j,h}^{n}\|^2\bigg)\nonumber\\+ \frac{\nu+\nu_m-|\nu-\nu_m|(1+\theta)}{4}\bigg(\|\nabla v_{j,h}^n\|^2-\|\nabla v_{j,h}^{n-1}\|^2+\|\nabla w_{j,h}^n\|^2-\|\nabla w_{j,h}^{n-1}\|^2\bigg)\nonumber\\+\frac{\nu+\nu_m-|\nu-\nu_m|(1+2\theta)}{4}\bigg(\|\nabla v_{j,h}^{n-1}\|^2+\|\nabla w_{j,h}^{n-1}\|^2\bigg)\nonumber\\\le\frac{2}{\alpha}\|f_{1,j}(t^{n+1})\|_{-1}^2+\frac{2}{\alpha}\|f_{2,j}(t^{n+1})\|_{-1}^2.
\end{align}

Now, if we choose $\Delta t\le\frac{\alpha h^2}{C\max\limits_{1\le j\le J}\big\{\|\nabla v_{j,h}^{'n}\|^2,\hspace{1mm}\|\nabla w_{j,h}^{'n}\|^2\big\}}$, multiplying both sides by $4\Delta t$, summing over timesteps $n=1,\cdots,M-1$, and finally dropping non-negative terms from left hand side finish the proof.
\end{proof}

\subsection{\large Error analysis}\label{ErrorAnalysis}
Now we consider the convergence of the proposed decoupled scheme.
\begin{theorem}
Suppose $(v_j,w_j,q_j,r_j)$ satisfies \eqref{els1}-\eqref{els3} with the regularity assumptions $v_j$,$w_j$ $\in L^{\infty}(0, T;$ $H^{m}(\Omega)^d)$ for $m=\max\{2,k+1\}$, $v_{j,t}, w_{j,t}, v_{j,tt}, w_{j,tt}\in L^{\infty}(0,T;H^1(\Omega)^d)$, and $v_{j,ttt}, w_{j,ttt}\in L^{\infty}(0,T;L^2(\Omega)^d)$. Then the ensemble average solution $(<v_{h}>, <w_{h}>)$ to Algorithm \eqref{Algn1} converges to the true ensemble average solution: For any $$\Delta t\le\frac{\alpha h^2}{C\max\limits_{1\le j\le J}\{\|\nabla v_{j,h}^{'n}\|,\|\nabla w_{j,h}^{'n}\|\}},$$ one has
\begin{align}
\|<v>^T-&<v_h>^M\|+\|<w>^T-<w_h>^M\|+\alpha\Delta t\sum\limits_{n=2}^{M}\bigg\{\|\nabla(<v>(t^n)-<v_h>^n)\|^2\nonumber\\&+\|\nabla(<w>(t^n)-<w_h>^n)\|^2\bigg\}^\frac12\le C (h^{k}+(\Delta t)^2+|\nu-\nu_m|(1-\theta)\Delta t).
\end{align}
\end{theorem}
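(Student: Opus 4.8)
The plan is to run an energy argument for the finite element error that parallels the stability proof, with the additional care demanded by the $\theta$-combination of the explicit viscous term and by the ensemble mean/fluctuation splitting of the nonlinearity. \emph{Step 1 (error equations and splitting).} Evaluate \eqref{els1}--\eqref{els2} at $t^{n+1}$ and test against $\chi_h\in V_h$; since the Scott--Vogelius pair gives $V_h\subset V$, the pressure terms drop out. Add and subtract the BDF2 quotient of $v_j(t^{n+1}),v_j(t^n),v_j(t^{n-1})$ for the time derivative, and in the $\nu-\nu_m$ term write $w_j(t^{n+1})=(1-\theta)w_j(t^n)+\theta\bigl(2w_j(t^n)-w_j(t^{n-1})\bigr)+\rho_j^n$, while in the convection term write $w_j(t^{n+1})=<w>^n+w_j^{'n}+\tau_j^n$ with $\tau_j^n:=w_j(t^{n+1})-(2w_j(t^n)-w_j(t^{n-1}))$; Taylor expansion gives $\|\tau_j^n\|,\|\nabla\tau_j^n\|=O(\Delta t^2)$ (using $w_{j,tt}\in L^\infty H^1$) and $\|\nabla\rho_j^n\|=O((1-\theta)\Delta t+\theta\Delta t^2)$ (using $w_{j,t},w_{j,tt}\in L^\infty H^1$), and the BDF2 quotient matches $v_{j,t}(t^{n+1})$ up to $O(\Delta t^2)$ in $L^2$ (using $v_{j,ttt}\in L^\infty L^2$). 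Subtracting Algorithm~\ref{Algn1} produces equations for $e_j^n:=v_j(t^n)-v_{j,h}^n$ and $\epsilon_j^n:=w_j(t^n)-w_{j,h}^n$. Split $e_j^n=\eta_j^n-\phi_{j,h}^n$, $\epsilon_j^n=\zeta_j^n-\psi_{j,h}^n$ with $\eta_j^n:=v_j(t^n)-P_{L^2}^{V_h}v_j(t^n)$ and $\zeta_j^n$ analogous, so $\phi_{j,h}^n,\psi_{j,h}^n\in V_h$ and, crucially, $(\eta_j^n,\chi_h)=(\zeta_j^n,l_h)=0$; the $\eta,\zeta$ pieces are controlled by the approximation properties \eqref{AppPro1}--\eqref{AppPro4}.

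\emph{Step 2 (energy estimate and viscous terms).} Choose $\chi_h=\phi_{j,h}^{n+1}$, $l_h=\psi_{j,h}^{n+1}$, apply the identity \eqref{ident} to the discrete time-derivative terms to obtain the telescoping quantities $\|\phi_{j,h}^{n+1}\|^2+\|2\phi_{j,h}^{n+1}-\phi_{j,h}^n\|^2$ (and the $\psi$ analogue) together with the nonnegative artifact $\tfrac{1}{4\Delta t}\|\phi_{j,h}^{n+1}-2\phi_{j,h}^n+\phi_{j,h}^{n-1}\|^2$, and keep $\tfrac{\nu+\nu_m}{2}(\|\nabla\phi_{j,h}^{n+1}\|^2+\|\nabla\psi_{j,h}^{n+1}\|^2)$ on the left. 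The cross $\nu-\nu_m$ error terms are treated exactly as in the stability proof: Cauchy--Schwarz and Young against $\|\nabla\phi_{j,h}^{n+1}\|^2,\|\nabla\phi_{j,h}^n\|^2,\|\nabla\phi_{j,h}^{n-1}\|^2$ (and $\psi$ analogues), combining with the good dissipation to leave a net coefficient of order $\alpha>0$ on the current gradient term and telescoping memory terms with nonnegative coefficients on the left — this is exactly where \eqref{theta_con} is invoked. The consistency piece $\tfrac{\nu-\nu_m}{2}(\nabla\rho_j^n,\nabla\phi_{j,h}^{n+1})$ is bounded by Young as $\tfrac{C}{\alpha}|\nu-\nu_m|^2\|\nabla\rho_j^n\|^2+\tfrac{\alpha}{16}\|\nabla\phi_{j,h}^{n+1}\|^2$, which after multiplication by $4\Delta t$ and summation contributes $C\bigl(|\nu-\nu_m|^2(1-\theta)^2\Delta t^2+|\nu-\nu_m|^2\theta^2\Delta t^4\bigr)$ to the right-hand side — the source of the $|\nu-\nu_m|(1-\theta)\Delta t$ term in the final rate.

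\emph{Step 3 (nonlinear terms).} Decompose the continuous convection of member $j$ as $b(<w>^n,v_j(t^{n+1}),\chi_h)+b(w_j^{'n},v_j(t^{n+1}),\chi_h)+b(\tau_j^n,v_j(t^{n+1}),\chi_h)$ and subtract the scheme's $b^*(<w_h>^n,v_{j,h}^{n+1},\chi_h)+b^*(w_{j,h}^{'n},2v_{j,h}^n-v_{j,h}^{n-1},\chi_h)$. This yields four families: (i) mean-error terms $b^*(<e>^n,v_j(t^{n+1}),\phi_{j,h}^{n+1})$ and $b^*(<w_h>^n,e_j^{n+1},\phi_{j,h}^{n+1})$, whose $\phi$-parts vanish by skew-symmetry $b^*(\cdot,\phi_{j,h}^{n+1},\phi_{j,h}^{n+1})=0$ (note $<w_h>^n\in V_h$), the remainders bounded via \eqref{nonlinearbound} and Young, with $\Delta t\sum\|\nabla<w_h>^n\|^2$ controlled by the stability lemma; (ii) extrapolation-consistency terms carrying $\tau_j^n$ or $v_j(t^{n+1})-(2v_j(t^n)-v_j(t^{n-1}))$, which are $O(\Delta t^2)$; (iii) the critical fluctuation term $b^*(w_{j,h}^{'n},2\phi_{j,h}^n-\phi_{j,h}^{n-1},\phi_{j,h}^{n+1})$, which by skew-symmetry equals $b^*(w_{j,h}^{'n},\phi_{j,h}^{n+1}-2\phi_{j,h}^n+\phi_{j,h}^{n-1},\phi_{j,h}^{n+1})$ and is bounded, via the inverse inequality, by $\tfrac{C}{\alpha h^2}\|\nabla w_{j,h}^{'n}\|^2\|\phi_{j,h}^{n+1}-2\phi_{j,h}^n+\phi_{j,h}^{n-1}\|^2+\tfrac{\alpha}{16}\|\nabla\phi_{j,h}^{n+1}\|^2$, the first term absorbed by the nonnegative BDF2 artifact under the stated timestep restriction; and (iv) the $\eta$-parts, including $b^*(w_{j,h}^{'n},2\eta_j^n-\eta_j^{n-1},\phi_{j,h}^{n+1})$ and $b^*(<w_h>^n,\eta_j^{n+1},\phi_{j,h}^{n+1})$, bounded via \eqref{nonlinearbound}, \eqref{AppPro4} and Young, with $\|\nabla w_{j,h}^{'n}\|^2$, $\|\nabla v_j(t^{n+1})\|^2$ serving as Gronwall multipliers. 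The $w$-momentum equation is handled symmetrically.

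\emph{Step 4 (Gronwall, conclusion, and main obstacle).} Multiply by $4\Delta t$, sum over $n=1,\dots,M-1$, and drop the nonnegative BDF2 and memory terms from the left. The Gronwall multipliers $d_n$ consist of $\|\nabla v_j(t^n)\|^2,\|\nabla w_j(t^n)\|^2$ (bounded by the regularity hypothesis $v_j,w_j\in L^\infty H^m$) and $\|\nabla v_{j,h}^{'n}\|^2,\|\nabla w_{j,h}^{'n}\|^2$ (whose $\Delta t$-weighted sums are bounded by the stability lemma), so $\exp(\Delta t\sum d_n)\le C$; the discrete Gronwall inequality then bounds $\|\phi_{j,h}^M\|^2+\|\psi_{j,h}^M\|^2+\alpha\Delta t\sum_n(\|\nabla\phi_{j,h}^n\|^2+\|\nabla\psi_{j,h}^n\|^2)$ by the startup errors (taken $O(h^{2k}+\Delta t^4)$) plus the consistency and approximation contributions, totalling $C(h^{2k}+\Delta t^4+|\nu-\nu_m|^2(1-\theta)^2\Delta t^2)$. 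Combining with the $\eta,\zeta$ bounds from \eqref{AppPro1}--\eqref{AppPro4} via the triangle inequality, taking square roots, and averaging over $j=1,\dots,J$ (the constant being uniform in $j$), with the time-discretization of the average contributing at most $O(\Delta t^2)$, gives the stated estimate. The main obstacle is the bookkeeping in Step 3: one must split the nonlinearity so that the ensemble-mean contribution is treated with its error--error part annihilated by skew-symmetry, while the fluctuation contribution — precisely the term whose explicit treatment forces the CFL-type restriction in the stability proof — is absorbed by the positive BDF2 artifact, and simultaneously the $\theta$-dependent viscous consistency error (Step 2) must be tracked with exactly the right powers of $|\nu-\nu_m|$ and $(1-\theta)$ so that it, and nothing else, degrades the second-order temporal rate.
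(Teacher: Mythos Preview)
Your overall architecture matches the paper's proof: error splitting via the $L^2$ projection into $V_h$, BDF2 identity \eqref{ident}, handling of the $\theta$-viscous cross terms to leave a net $\alpha$-dissipation, absorption of the fluctuation term $b^*(w_{j,h}^{'n},2\phi_{j,h}^n-\phi_{j,h}^{n-1},\phi_{j,h}^{n+1})$ into the BDF2 artifact via the inverse inequality under the timestep restriction, and the identification of the $(1-\theta)|\nu-\nu_m|\Delta t$ contribution from the viscous consistency residual. All of that is correct and essentially identical to the paper.

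There is, however, a genuine gap in Step~3(i). After your mean/fluctuation decomposition you are left with a term of the type
\[
b^*\bigl((2\psi_{j,h}^n-\psi_{j,h}^{n-1}),\,v_j(t^{n+1}),\,\phi_{j,h}^{n+1}\bigr)
\]
(or its mean/fluctuation pieces), where the discrete error sits in the first slot. You propose to bound this via \eqref{nonlinearbound}, which gives $C\|\nabla(2\psi_{j,h}^n-\psi_{j,h}^{n-1})\|\,\|\nabla v_j(t^{n+1})\|\,\|\nabla\phi_{j,h}^{n+1}\|$; after Young this puts $\tfrac{C}{\alpha}\|\nabla v_j\|^2\|\nabla\psi_{j,h}^n\|^2$ on the right-hand side. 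But the left-hand side carries only $\alpha\Delta t\sum_n\|\nabla\psi_{j,h}^n\|^2$, so this term cannot be absorbed without a smallness assumption on $\|\nabla v_j\|$, and it is \emph{not} of Gronwall form $d_n a_n$ with $a_n$ the $L^2$ energy. Consequently your Step~4 does not close: the multipliers you list, $\|\nabla v_j(t^n)\|^2$ and $\|\nabla w_{j,h}^{'n}\|^2$, are attached to gradient norms of the error, not to the $L^2$ energy that the discrete Gronwall lemma requires.

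The paper closes this term differently: it uses H\"older with $L^2$--$L^6$--$L^3$, the embedding $H^2\hookrightarrow W^{1,6}$, and the interpolation $\|\phi\|_{L^3}\le C\|\phi\|^{1/2}\|\nabla\phi\|^{1/2}$ together with Poincar\'e to obtain
\[
\bigl|b^*\bigl((2\psi_{j,h}^n-\psi_{j,h}^{n-1}),\,v_j(t^{n+1}),\,\phi_{j,h}^{n+1}\bigr)\bigr|
\le C\,\|2\psi_{j,h}^n-\psi_{j,h}^{n-1}\|\,\|v_j(t^{n+1})\|_{H^2}\,\|\nabla\phi_{j,h}^{n+1}\|,
\]
so that after Young the right-hand side carries $\tfrac{C}{\alpha}\|v_j(t^{n+1})\|_{H^2}^2\,\|2\psi_{j,h}^n-\psi_{j,h}^{n-1}\|^2$, which \emph{is} of Gronwall form with $d_n=\tfrac{C}{\alpha}\|v_j\|_{H^2}^2$ and $a_n=\|2\psi_{j,h}^n-\psi_{j,h}^{n-1}\|^2$ (part of the telescoped energy). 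This is precisely why the theorem hypothesizes $v_j,w_j\in L^\infty(0,T;H^m)$ with $m\ge 2$; your use of \eqref{nonlinearbound} alone does not exploit, and cannot dispense with, that $H^2$ regularity.
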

\begin{proof}
We start our proof by obtaining the error equations. Testing \eqref{els1} and \eqref{els2} with $\chi_h, l_h\in V_h$ at the time level $t^{n+1}$, the continuous variational formulations can be written as
\begin{align}
\bigg(\frac{3v_j(t^{n+1})-4v_j(t^n)+v_j(t^{n-1})}{2\Delta t},\chi_h\bigg)+(w_j(t^{n+1})\cdot\nabla v_j(t^{n+1}),\chi_h)\nonumber\\+\frac{\nu+\nu_m}{2}(\nabla v_j(t^{n+1}), \nabla\chi_h) +\frac{\nu-\nu_m}{2}((1+\theta)\nabla w_j(t^n)-\theta\nabla  w_j(t^{n-1}),\chi_h)\nonumber\\ =(f_{1,j}(t^{n+1}),\chi_h)-\frac{\nu-\nu_m}{2}\big(\nabla( w_j(t^{n+1})-(1+\theta)w_j(t^n)+\theta w_j(t^{n-1})),\chi_h\big)\nonumber\\-\bigg(v_{j,t}(t^{n+1})-\frac{3v_j(t^{n+1})-4v_j(t^n)+v_j(t^{n-1})}{2\Delta t}, \chi_h\bigg), \label{conweakn1}
\end{align}
and
\begin{align}
\bigg(\frac{3w_j(t^{n+1})-4w_j(t^n)+w_j(t^{n-1})}{2\Delta t},l_h\bigg)+(v_j(t^{n+1})\cdot\nabla w_j(t^{n+1}),l_h)\nonumber\\+\frac{\nu+\nu_m}{2}(\nabla w_j(t^{n+1}), \nabla l_h) +\frac{\nu-\nu_m}{2}((1+\theta)\nabla v_j(t^n)-\theta\nabla v_j(t^{n-1})),l_h)\nonumber\\ =(f_{2,j}(t^{n+1}),l_h)-\frac{\nu-\nu_m}{2}\big(\nabla( v_j(t^{n+1})-(1+\theta)v_j(t^n)+\theta v_j(t^{n-1})),l_h\big)\nonumber\\-\bigg(w_{j,t}(t^{n+1})-\frac{3w_j(t^{n+1})-4w_j(t^n)+w_j(t^{n-1})}{2\Delta t}, l_h\bigg). \label{conweakn2}
\end{align}
Denote $e_{v,j}^n:=v_j(t^n)-v_{j,h}^n,\hspace{2mm}e_{w,j}^n:=w_j(t^n)-w_{j,h}^n.$ Subtracting \eqref{weaknew1} and \eqref{weaknew2} from equation \eqref{conweakn1} and \eqref{conweakn2} respectively, yields 

\begin{align}
\bigg(\frac{3e_{j,v}^{n+1}-4e_{j,v}^n+e_{j,v}^{n-1}}{2\Delta t},&\chi_h\bigg)+\frac{\nu+\nu_m}{2}(\nabla e_{j,v}^{n+1},\nabla \chi_h)+\frac{\nu-\nu_m}{2}\big((1+\theta)\nabla e_{j,w}^n-\theta\nabla e_{j,w}^{n-1},\nabla\chi_h\big)\nonumber\\&+((2e_{j,w}^n-e_{j,w}^{n-1})\cdot\nabla v_j(t^{n+1}),\chi_h)+((2w_{j,h}^n-w_{j,h}^{n-1})\cdot\nabla e_{j,v}^{n+1},\chi_h)\nonumber\\&-(w_{j,h}^{'n}\cdot\nabla(e_{j,v}^{n+1}-2e_{j,v}^n+e_{j,v}^{n-1}),\chi_h)=-G_1(t,v_j,w_j,\chi_h),
\end{align}
and
\begin{align}
\bigg(\frac{3e_{j,w}^{n+1}-4e_{j,w}^n+e_{j,w}^{n-1}}{2\Delta t},&l_h\bigg)+\frac{\nu+\nu_m}{2}(\nabla e_{j,w}^{n+1},\nabla l_h)+\frac{\nu-\nu_m}{2}\big((1+\theta)\nabla e_{j,v}^n-\theta\nabla e_{j,v}^{n-1},\nabla l_h\big)\nonumber\\&+((2e_{j,v}^n-e_{j,v}^{n-1})\cdot\nabla w_j(t^{n+1}),l_h)+((2v_{j,h}^n-v_{j,h}^{n-1})\cdot\nabla e_{j,w}^{n+1},l_h)\nonumber\\&-(v_{j,h}^{'n}\cdot\nabla(e_{j,w}^{n+1}-2e_{j,w}^n+e_{j,w}^{n-1}),l_h)=-G_2(t,v_j,w_j,l_h),
\end{align}
where
\begin{align*}
G_1(t,v_j,w_j,\chi_h):=&((w_j(t^{n+1})-2w_j(t^n)+w_j(t^{n-1}))\cdot\nabla v_j(t^{n+1}),\chi_h)\nonumber\\&+(w_{j,h}^{'n}\cdot\nabla(v_j(t^{n+1})-2v_j(t^n)+v_j(t^{n-1})),\chi_h)\nonumber\\&+\frac{\nu-\nu_m}{2}\big(\nabla( w_j(t^{n+1})-(1+\theta)w_j(t^n)+\theta w_j(t^{n-1})),\nabla\chi_h\big)\nonumber\\&+\bigg(v_{j,t}(t^{n+1})-\frac{3v_j(t^{n+1})-4v_j(t^n)+v_j(t^{n-1})}{2\Delta t}, \chi_h\bigg),
\end{align*}
and
\begin{align*}
G_2(t,v_j,w_j,l_h):=&((v_j(t^{n+1})-2v_j(t^n)+v_j(t^{n-1}))\cdot\nabla w_j(t^{n+1}),l_h)\nonumber\\&+(v_{j,h}^{'n}\cdot\nabla(w_j(t^{n+1})-2w_j(t^n)+w_j(t^{n-1})),l_h)\nonumber\\&+\frac{\nu-\nu_m}{2}\big(\nabla( v_j(t^{n+1})-(1+\theta)v_j(t^n)+\theta v_j(t^{n-1})),\nabla l_h\big)\nonumber\\&+\bigg(w_{j,t}(t^{n+1})-\frac{3w_j(t^{n+1})-4w_j(t^n)+w_j(t^{n-1})}{2\Delta t}, l_h\bigg).
\end{align*}
Now we decompose the errors as
\begin{align*}
    e_{j,v}^n:& = v_j(t^n)-v_{j,h}^n=(v_j(t^n)-\tilde{v}_j^n)-(v_{j,h}^n-\tilde{v}_j^n):=\eta_{j,v}^n-\phi_{j,h}^n,\\
    e_{j,w}^n: &= w_j(t^n)-w_{j,h}^n=(w_j(t^n)-\tilde{w}_j^n)-(w_{j,h}^n-\tilde{w}_j^n):=\eta_{j,w}^n-\psi_{j,h}^n,
\end{align*}
where $\tilde{v}_j^n: =P_{V_h}^{L^2}(v_j(t^n))\in V_h$ and $\tilde{w}_j^n: =P_{V_h}^{L^2}(w_j(t^n))\in V_h$ are the $L^2$ projections of $v_j(t^n)$ and $w_j(t^n)$ into $V_h$, respectively. Note that $(\eta_{j,v}^n,v_h)=(\eta_{j,w}^n,v_h)=0\hspace{2mm} \forall v_h\in V_h.$  Rewriting, we have for $\chi_h, l_h\in V_h$
\begin{align}
\bigg(&\frac{3\phi_{j,h}^{n+1}-4\phi_{j,h}^n+\phi_{j,h}^{n-1}}{2\Delta t},\chi_h\bigg)+\frac{\nu+\nu_m}{2}(\nabla \phi_{j,h}^{n+1},\nabla \chi_h)+\frac{\nu-\nu_m}{2}\big( (1+\theta)\nabla\psi_{j,h}^n-\theta\nabla\psi_{j,h}^{n-1},\nabla\chi_h\big)\nonumber\\&+((2\psi_{j,h}^n-\psi_{j,h}^{n-1})\cdot\nabla v_j(t^{n+1}),\chi_h)+((2w_{j,h}^n-w_{j,h}^{n-1})\cdot\nabla\phi_{j,h}^{n+1},\chi_h)\nonumber\\&-(w_{j,h}^{'n}\cdot\nabla(\phi_{j,h}^{n+1}-\phi_{j,h}^n+\phi_{j,h}^{n-1}),\chi_h)=\frac{\nu-\nu_m}{2}\big((1+\theta)\nabla \eta_{j,w}^n-\theta\nabla\eta_{j,w}^{n-1},\nabla\chi_h\big)\nonumber\\&+\frac{\nu+\nu_m}{2}(\nabla \eta_{j,v}^{n+1},\nabla \chi_h)+((2\eta_{j,w}^n-\eta_{j,w}^{n-1})\cdot\nabla v_j(t^{n+1}),\chi_h)+((2w_{j,h}^n-w_{j,h}^{n-1})\cdot\nabla\eta_{j,v}^{n+1},\chi_h)\nonumber\\&-(w_{j,h}^{'n}\cdot\nabla(\eta_{j,v}^{n+1}-\eta_{j,v}^n+\eta_{j,v}^{n-1}),\chi_h)+G_1(t,v_j,w_j,\chi_h),\label{phi2n}
\end{align}
and
\begin{align}
\bigg(&\frac{3\psi_{j,h}^{n+1}-4\psi_{j,h}^n+\psi_{j,h}^{n-1}}{2\Delta t},l_h\bigg)+\frac{\nu+\nu_m}{2}(\nabla \psi_{j,h}^{n+1},\nabla l_h)+\frac{\nu-\nu_m}{2}\big((1+\theta)\nabla \phi_{j,h}^n-\theta\nabla\phi_{j,h}^{n-1},\nabla l_h\big)\nonumber\\&+((2\phi_{j,h}^n-\phi_{j,h}^{n-1})\cdot\nabla w_j(t^{n+1}),l_h)+((2v_{j,h}^n-v_{j,h}^{n-1})\cdot\nabla\psi_{j,h}^{n+1},l_h)\nonumber\\&-(v_{j,h}^{'n}\cdot\nabla(\psi_{j,h}^{n+1}-\psi_{j,h}^n+\psi_{j,h}^{n-1}),l_h)=\frac{\nu-\nu_m}{2}\big((1+\theta)\nabla \eta_{j,v}^n-\theta\nabla\eta_{j,v}^{n-1}),\nabla l_h\big)\nonumber\\&+\frac{\nu+\nu_m}{2}(\nabla \eta_{j,w}^{n+1},\nabla l_h)+((2\eta_{j,v}^n-\eta_{j,v}^{n-1})\cdot\nabla w_j(t^{n+1}),l_h)+((2v_{j,h}^n-v_{j,h}^{n-1})\cdot\nabla\eta_{j,w}^{n+1},l_h)\nonumber\\&-(v_{j,h}^{'n}\cdot\nabla(\eta_{j,w}^{n+1}-\eta_{j,w}^n+\eta_{j,w}^{n-1}),l_h)+G_2(t,v_j,w_j,l_h).\label{psi2n}
\end{align}

Choose $\chi_h=\phi_{j,h}^{n+1}, l_h=\psi_{j,h}^{n+1}$ and use the identity \eqref{ident} in \eqref{phi2n} and \eqref{psi2n}, to obtain

\begin{align}
\frac{1}{4\Delta t}(\|\phi_{j,h}^{n+1}\|^2-\|\phi_{j,h}^{n}\|^2+\|2\phi_{j,h}^{n+1}-\phi_{j,h}^{n}\|^2-\|2\phi_{j,h}^{n}-\phi_{j,h}^{n-1}\|^2\nonumber\\+\|\phi_{j,h}^{n+1}-2\phi_{j,h}^{n}+\phi_{j,h}^{n-1}\|^2)+\frac{\nu+\nu_m}{2}\|\nabla \phi_{j,h}^{n+1}\|^2\nonumber\\\le(1+\theta)\frac{|\nu-\nu_m|}{2}\bigg\{|\big(\nabla\psi_{j,h}^n,\nabla\phi_{j,h}^{n+1}\big)|+|\big(\nabla\eta_{j,w}^n,\nabla\phi_{j,h}^{n+1}\big)|\bigg\}\nonumber\\+\theta\frac{|\nu-\nu_m|}{2}\bigg\{|\big(\nabla\psi_{j,h}^{n-1},\nabla\phi_{j,h}^{n+1}\big)|+|\big(\nabla\eta_{j,w}^{n-1},\nabla\phi_{j,h}^{n+1}\big)|\bigg\}+\frac{\nu+\nu_m}{2}|\big(\nabla \eta_{j,v}^{n+1},\nabla \phi_{j,h}^{n+1}\big)|\nonumber\\+|\big((2\eta_{j,w}^n-\eta_{j,w}^{n-1})\cdot\nabla v_j(t^{n+1}),\phi_{j,h}^{n+1}\big)|+|\big((2w_{j,h}^n-w_{j,h}^{n-1})\cdot\nabla\eta_{j,v}^{n+1},\phi_{j,h}^{n+1}\big)|\nonumber\\+|\big(w_{j,h}^{'n}\cdot\nabla(\eta_{j,v}^{n+1}-\eta_{j,v}^n+\eta_{j,v}^{n-1}),\phi_{j,h}^{n+1}\big)|+|\big((2\psi_{j,h}^n-\psi_{j,h}^{n-1})\cdot\nabla v_j(t^{n+1}),\phi_{j,h}^{n+1}\big)|\nonumber\\+|\big(w_{j,h}^{'n}\cdot\nabla (\phi_{j,h}^{n+1}-\phi_{j,h}^n+\phi_{j,h}^{n-1}),\phi_{j,h}^{n+1}\big)|+|G_1(t,v_j,w_j,\phi_{j,h}^{n+1})|,\label{baddddn1}
\end{align}
and
\begin{align}
\frac{1}{4\Delta t}(\|\psi_{j,h}^{n+1}\|^2-\|\psi_{j,h}^{n}\|^2+\|2\psi_{j,h}^{n+1}-\psi_{j,h}^{n}\|^2-\|2\psi_{j,h}^{n}-\psi_{j,h}^{n-1}\|^2\nonumber\\+\|\psi_{j,h}^{n+1}-2\psi_{j,h}^{n}+\psi_{j,h}^{n-1}\|^2)+\frac{\nu+\nu_m}{2}\|\nabla \psi_{j,h}^{n+1}\|^2\nonumber\\\le(1+\theta)\frac{|\nu-\nu_m|}{2}\bigg\{|\big(\nabla\phi_{j,h}^n,\nabla\psi_{j,h}^{n+1}\big)|+|\big(\nabla\eta_{j,v}^n,\nabla\psi_{j,h}^{n+1}\big)|\bigg\}\nonumber\\+\theta\frac{|\nu-\nu_m|}{2}\bigg\{|\big(\nabla\phi_{j,h}^{n-1},\nabla\psi_{j,h}^{n+1}\big)|+|\big(\nabla\eta_{j,v}^{n-1},\nabla\psi_{j,h}^{n+1}\big)|\bigg\}+\frac{\nu+\nu_m}{2}|\big(\nabla \eta_{j,w}^{n+1},\nabla \psi_{j,h}^{n+1}\big)|\nonumber\\+|\big((2\eta_{j,v}^n-\eta_{j,v}^{n-1})\cdot\nabla w_j(t^{n+1}),\psi_{j,h}^{n+1}\big)|+|\big((2v_{j,h}^n-v_{j,h}^{n-1})\cdot\nabla\eta_{j,w}^{n+1},\psi_{j,h}^{n+1}\big)|\nonumber\\+|\big(v_{j,h}^{'n}\cdot\nabla(\eta_{j,w}^{n+1}-\eta_{j,w}^n+\eta_{j,w}^{n-1}),\psi_{j,h}^{n+1}\big)|+|\big((2\phi_{j,h}^n-\phi_{j,h}^{n-1})\cdot\nabla w_j(t^{n+1}),\psi_{j,h}^{n+1}\big)|\nonumber\\+|\big(v_{j,h}^{'n}\cdot\nabla (\psi_{j,h}^{n+1}-\psi_{j,h}^n+\psi_{j,h}^{n-1}),\psi_{j,h}^{n+1}\big)|+|G_2(t,v_j,w_j,\psi_{j,h}^{n+1})|.\label{baddddn2}
\end{align}
We now turn our attention to finding bounds on the right hand side terms of \eqref{baddddn1}. Applying Cauchy-Schwarz and Young's inequalities on the first five turns provides
\begin{align*}
    (1+\theta)\frac{|\nu-\nu_m|}{2}|\big(\nabla\psi_{j,h}^n,\nabla\phi_{j,h}^{n+1}\big)|&\le(1+\theta)\frac{|\nu-\nu_m|}{4}\big(\|\nabla\psi_{j,h}^n\|^2+\|\nabla\phi_{j,h}^{n+1}\|^2\big),\\
    \theta\frac{|\nu-\nu_m|}{2}|\big(\nabla\psi_{j,h}^{n-1},\nabla\phi_{j,h}^{n+1}\big)|&\le\theta\frac{|\nu-\nu_m|}{4}\big(\|\nabla\psi_{j,h}^{n-1}\|^2+\|\nabla\phi_{j,h}^{n+1}\|^2\big),\\
    (1+\theta)\frac{|\nu-\nu_m|}{2}|\big(\nabla\eta_{j,w}^n,\nabla\phi_{j,h}^{n+1}\big)|&\le\frac{\alpha}{36}\|\nabla\phi_{j,h}^{n+1}\|^2+\frac{9(1+\theta)^2(\nu-\nu_m)^2}{4\alpha}\|\nabla\eta_{j,w}^n\|^2,\\
    \theta\frac{|\nu-\nu_m|}{2}|\big(\nabla\eta_{j,w}^{n-1},\nabla\phi_{j,h}^{n+1}\big)|&\le \frac{\alpha}{36}\|\nabla\phi_{j,h}^{n+1}\|^2+\frac{9\theta^2(\nu-\nu_m)^2}{4\alpha}\|\nabla\eta_{j,w}^{n-1}\|^2,\\
    \frac{\nu+\nu_m}{2}|\big(\nabla \eta_{j,v}^{n+1},\nabla \phi_{j,h}^{n+1}\big)|&\le\frac{\alpha}{36}\|\nabla\phi_{j,h}^{n+1}\|^2+\frac{9(\nu+\nu_m)^2}{4\alpha}\|\nabla \eta_{j,v}^{n+1}\|^2.
\end{align*}

Applying Young’s inequalities with \eqref{nonlinearbound} on the first three nonlinear terms yields
\begin{align*}
    |\big((2\eta_{j,w}^n-\eta_{j,w}^{n-1})\cdot\nabla v_j(t^{n+1}),\phi_{j,h}^{n+1}\big)|&\le C\|\nabla(2\eta_{j,w}^n-\eta_{j,w}^{n-1})\|\|\nabla v_j(t^{n+1})\|\|\nabla\phi_{j,h}^{n+1}\|\\&\le \frac{\alpha}{36}\|\nabla\phi_{j,h}^{n+1}\|^2+\frac{C}{\alpha}\|\nabla(2\eta_{j,w}^n-\eta_{j,w}^{n-1})\|^2\|\nabla v_j(t^{n+1})\|^2,\\|\big((2w_{j,h}^n-w_{j,h}^{n-1})\cdot\nabla\eta_{j,v}^{n+1},\phi_{j,h}^{n+1}\big)|&\le C\|\nabla  (2w_{j,h}^n-w_{j,h}^{n-1})\|\|\nabla\eta_{j,v}^{n+1}\|\|\nabla  \phi_{j,h}^{n+1}\|\\
    &\le \frac{\alpha}{36}\|\nabla\phi_{j,h}^{n+1}\|^2+\frac{C}{\alpha}\|\nabla  (2w_{j,h}^n-w_{j,h}^{n-1})\|^2\|\nabla\eta_{j,v}^{n+1}\|^2,\\
    |\big(w_{j,h}^{'n}\cdot\nabla(\eta_{j,v}^{n+1}-\eta_{j,v}^n+\eta_{j,v}^{n-1}),\phi_{j,h}^{n+1}\big)|&\le C\|\nabla w_{j,h}^{'n}\|\|\nabla(\eta_{j,v}^{n+1}-\eta_{j,v}^n+\eta_{j,v}^{n-1})\|\|\nabla\phi_{j,h}^{n+1}\|\\
    &\le \frac{\alpha}{36}\|\nabla\phi_{j,h}^{n+1}\|^2+\frac{C}{\alpha}\|\nabla w_{j,h}^{'n}\|^2\|\nabla(\eta_{j,v}^{n+1}-\eta_{j,v}^n+\eta_{j,v}^{n-1})\|^2.
\end{align*}
For the fourth nonlinear term, we use H\"older’s inequality, Sobolev embedding theorems,
Poincare’s and Young’s inequalities to reveal
\begin{align*}
    |\big((2\psi_{j,h}^n-\psi_{j,h}^{n-1})\cdot\nabla v_j(t^{n+1}),\phi_{j,h}^{n+1}\big)|&\le C\|2\psi_{j,h}^n-\psi_{j,h}^{n-1}\|\|\nabla v_j(t^{n+1})\|_{L^6}\| \phi_{j,h}^{n+1}\|_{L^3}\\&\le C \|2\psi_{j,h}^n-\psi_{j,h}^{n-1}\|\| v_j(t^{n+1})\|_{H^2}\| \phi_{j,h}^{n+1}\|^{1/2}\|\nabla \phi_{j,h}^{n+1}\|^{1/2}\\
    &\le  C \|2\psi_{j,h}^n-\psi_{j,h}^{n-1}\|\| v_j(t^{n+1})\|_{H^2}\|\nabla \phi_{j,h}^{n+1}\|\\&\le \frac{\alpha}{36}\|\nabla\phi_{j,h}^{n+1}\|^2+\frac{C}{\alpha}\|v_j(t^{n+1})\|_{H^2}^2\|2\psi_{j,h}^n-\psi_{j,h}^{n-1}\|^2.
\end{align*}
Applying inverse and Young’s inequalities with \eqref{nonlinearbound} on the fifth nonlinear term yields
\begin{align*}
    |\big(w_{j,h}^{'n}\cdot\nabla (\phi_{j,h}^{n+1}-\phi_{j,h}^n+\phi_{j,h}^{n-1}),\phi_{j,h}^{n+1}\big)|&\le C\|\nabla w_{j,h}^{'n}\|\|\nabla (\phi_{j,h}^{n+1}-\phi_{j,h}^n+\phi_{j,h}^{n-1})\|\|\nabla\phi_{j,h}^{n+1}\|
    \\&\le\frac{\alpha}{36}\|\nabla\phi_{j,h}^{n+1}\|^2+\frac{C}{\alpha h^2}\|\nabla w_{j,h}^{'n}\|^2\| \phi_{j,h}^{n+1}-\phi_{j,h}^n+\phi_{j,h}^{n-1}\|^2.
\end{align*}
Using Taylor's series, Cauchy-Schwarz, Poincare's and Young's inequalities the last term is evaluated as
\begin{align*}
    |G_1(t,v_j,w_j,\phi_{j,h}^{n+1})|\le &\frac{\alpha}{36}\|\nabla\phi_{j,h}^{n+1}\|^2+(\Delta t)^2\frac{9(\nu-\nu_m)^2(1-\theta)^2}{\alpha}\|\nabla  w_t(s^{***})\|^2\\&+(\Delta t)^4\frac{C}{\alpha}\bigg\{  \|\nabla w_{tt}(s^*)\|^2\|\nabla v_j(t^{n+1})\|^2+\|\nabla w_{j,h}^{'n}\|^2\|\nabla v_{tt}(s^{**})\|^2+\|v_{ttt}(s^{****})\|^2\bigg\},
\end{align*}
with $s^{*}, s^{**}, s^{***}, s^{****}\in[t^{n-1},t^{n+1}]$. Using these estimates in \eqref{baddddn1} and reducing produces
\begin{align}
\frac{1}{4\Delta t}\big(\|\phi_{j,h}^{n+1}\|^2-\|\phi_{j,h}^{n}\|^2+\|2\phi_{j,h}^{n+1}-\phi_{j,h}^{n}\|^2-\|2\phi_{j,h}^{n}-\phi_{j,h}^{n-1}\|^2\big)\nonumber\\\bigg(\frac{1}{4\Delta t}-\frac{C}{\alpha h^2}\|\nabla w_{j,h}^{'n}\|^2\bigg)\|\phi_{j,h}^{n+1}-2\phi_{j,h}^{n}+\phi_{j,h}^{n-1}\|^2+\frac{\nu+\nu_m}{4}\|\nabla \phi_{j,h}^{n+1}\|^2\nonumber\\\le(1+\theta)\frac{|\nu-\nu_m|}{4}\|\nabla\psi_{j,h}^n\|^2+\theta\frac{|\nu-\nu_m|}{4}\|\nabla\psi_{j,h}^{n-1}\|^2+\frac{9(1+\theta)^2(\nu-\nu_m)^2}{4\alpha}\|\nabla\eta_{j,w}^n\|^2\nonumber\\+\frac{9\theta^2(\nu-\nu_m)^2}{4\alpha}\|\nabla\eta_{j,w}^{n-1}\|^2+\frac{9(\nu+\nu_m)^2}{4\alpha}\|\nabla \eta_{j,v}^{n+1}\|^2+\frac{C}{\alpha}\|\nabla(2\eta_{j,w}^n-\eta_{j,w}^{n-1})\|^2\|\nabla v_j(t^{n+1})\|^2\nonumber\\+\frac{C}{\alpha}\|\nabla  (2w_{j,h}^n-w_{j,h}^{n-1})\|^2\|\nabla\eta_{j,v}^{n+1}\|^2+\frac{C}{\alpha}\|\nabla w_{j,h}^{'n}\|^2\|\nabla(\eta_{j,v}^{n+1}-\eta_{j,v}^n+\eta_{j,v}^{n-1})\|^2\nonumber\\+\frac{C}{\alpha}\|v_j(t^{n+1})\|_{H^2}^2\|2\psi_{j,h}^n-\psi_{j,h}^{n-1}\|^2+(\Delta t)^2\frac{9(\nu-\nu_m)^2(1-\theta)^2}{\alpha}\|\nabla  w_t(s^{***})\|^2\nonumber\\+(\Delta t)^4\frac{C}{\alpha}\bigg\{  \|\nabla w_{tt}(s^*)\|^2\|\nabla v_j(t^{n+1})\|^2+\|\nabla w_{j,h}^{'n}\|^2\|\nabla v_{tt}(s^{**})\|^2+\|v_{ttt}(s^{****})\|^2\bigg\}.\label{upbd1}
\end{align}
Apply similar techniques to \eqref{baddddn2}, we get
\begin{align}
\frac{1}{4\Delta t}\big(\|\psi_{j,h}^{n+1}\|^2-\|\psi_{j,h}^{n}\|^2+\|2\psi_{j,h}^{n+1}-\psi_{j,h}^{n}\|^2-\|2\psi_{j,h}^{n}-\psi_{j,h}^{n-1}\|^2\big)\nonumber\\+\bigg(\frac{1}{4\Delta t}-\frac{C}{\alpha h^2}\|\nabla v_{j,h}^{'n}\|^2\bigg)\|\psi_{j,h}^{n+1}-2\psi_{j,h}^{n}+\psi_{j,h}^{n-1}\|^2+\frac{\nu+\nu_m}{4}\|\nabla \psi_{j,h}^{n+1}\|^2\nonumber\\\le(1+\theta)\frac{|\nu-\nu_m|}{4}\|\nabla\phi_{j,h}^n\|^2+\theta\frac{|\nu-\nu_m|}{4}\|\nabla\phi_{j,h}^{n-1}\|^2+\frac{9(1+\theta)^2(\nu-\nu_m)^2}{4\alpha}\|\nabla\eta_{j,v}^n\|^2\nonumber\\+\frac{9\theta^2(\nu-\nu_m)^2}{4\alpha}\|\nabla\eta_{j,v}^{n-1}\|^2+\frac{9(\nu+\nu_m)^2}{4\alpha}\|\nabla \eta_{j,w}^{n+1}\|^2+\frac{C}{\alpha}\|\nabla(2\eta_{j,v}^n-\eta_{j,v}^{n-1})\|^2\|\nabla w_j(t^{n+1})\|^2\nonumber\\+\frac{C}{\alpha}\|\nabla  (2v_{j,h}^n-v_{j,h}^{n-1})\|^2\|\nabla\eta_{j,w}^{n+1}\|^2+\frac{C}{\alpha}\|\nabla v_{j,h}^{'n}\|^2\|\nabla(\eta_{j,w}^{n+1}-\eta_{j,w}^n+\eta_{j,w}^{n-1})\|^2\nonumber\\+\frac{C}{\alpha}\|w_j(t^{n+1})\|_{H^2}^2\|2\phi_{j,h}^n-\phi_{j,h}^{n-1}\|^2+(\Delta t)^2\frac{9(\nu-\nu_m)^2(1-\theta)^2}{\alpha}\|\nabla  v_t(s^{***})\|^2\nonumber\\+(\Delta t)^4\frac{C}{\alpha}\bigg\{  \|\nabla v_{tt}(t^*)\|^2\|\nabla w_j(t^{n+1})\|^2+\|\nabla v_{j,h}^{'n}\|^2\|\nabla w_{tt}(t^{**})\|^2+\|w_{ttt}(t^{****})\|^2\bigg\},\label{upbd2}
\end{align}
with $t^{*}, t^{**}, t^{***}, t^{****}\in[t^{n-1},t^{n+1}]$. Now adds \eqref{upbd1} and \eqref{upbd2}, multiply by $4\Delta t$, assume $$\Delta t\le\frac{\alpha h^2}{C\max\limits_{1\le j\le J}\{\|\nabla v_{j,h}^{'n}\|,\|\nabla w_{j,h}^{'n}\|\}},$$ drops non-negative terms, use stability and
regularity assumptions, $\|\phi_{j,h}^0\|=\|\psi_{j,h}^0\|=\|\phi_{j,h}^1\|=\|\psi^1_{j,h}\|=0$, $\Delta tM=T$, and sum over the
time steps to find
\begin{align}
\|\phi_{j,h}^M&\|^2+\|2\phi_{j,h}^M-\phi_{j,h}^{M-1}\|^2+\|\psi_{j,h}^M\|^2+\|2\psi_{j,h}^M-\psi_{j,h}^{M-1}\|^2\nonumber\\&+\alpha\Delta t\sum\limits_{n=2}^{M}\big(\|\nabla\phi_{j,h}^n\|^2+\|\nabla\psi_{j,h}^n\|^2\big)\le \frac{9(1+\theta)^2(\nu-\nu_m)^2}{\alpha}\Delta  t\sum\limits_{n=1}^{M-1}(\|\nabla\eta_{j,v}^{n}\|^2+\|\nabla\eta_{j,w}^{n}\|^2)\nonumber\\&+\frac{9\theta^2(\nu-\nu_m)^2}{\alpha}\Delta t\sum\limits_{n=1}^{M-1}(\|\nabla\eta_{j,v}^{n-1}\|^2+\|\nabla\eta_{j,w}^{n-1}\|^2)+\frac{9(\nu+\nu_m)^2}{\alpha}\Delta t\sum\limits_{n=1}^{M-1}(\|\nabla\eta_{j,v}^{n+1}\|^2+\|\nabla\eta_{j,w}^{n+1}\|^2)\nonumber\\&+\frac{C}{\alpha}\Delta t\sum\limits_{n=1}^{M-1}\bigg\{\|\nabla(2\eta_{j,w}^n-\eta_{j,w}^{n-1})\|^2\|\nabla v_j(t^{n+1})\|^2+\|\nabla(2\eta_{j,v}^n-\eta_{j,v}^{n-1})\|^2\|\nabla w_j(t^{n+1})\|^2\bigg\}\nonumber\\&+\frac{C}{\alpha}\Delta t\sum\limits_{n=1}^{M-1}\bigg\{\|\nabla(2w_{j,h}^n-w_{j,h}^{n-1})\|^2\|\nabla\eta_{j,v}^{n+1}\|^2+\|\nabla(2v_{j,h}^n-v_{j,h}^{n-1})\|^2\|\nabla\eta_{j,w}^{n+1}\|^2\bigg\}\nonumber\\&+\frac{C}{\alpha}\Delta t\sum\limits_{n=1}^{M-1}\bigg\{\|\nabla w_{j,h}^{'n}\|^2\|\nabla(\eta_{j,v}^{n+1}-\eta_{j,v}^n+\eta_{j,v}^{n-1})\|^2+\|\nabla v_{j,h}^{'n}\|^2\|\nabla(\eta_{j,w}^{n+1}-\eta_{j,w}^n+\eta_{j,w}^{n-1})\|^2\bigg\}\nonumber\\&+\frac{C}{\alpha}\Delta t\sum\limits_{n=1}^{M-1}\bigg\{\|v_j(t^{n+1})\|_{L^\infty(0,T;H^2(\Omega)}^2\|2\psi_{j,h}^n-\psi_{j,h}^{n-1}\|^2+\|w_j(t^{n+1})\|_{H^2}^2\|2\phi_{j,h}^n-\phi_{j,h}^{n-1}\|^2\bigg\}\nonumber\\&+C\left((\Delta t)^4+(\nu-\nu_m)^2(1-\theta)^2(\Delta t)^2\right).
\end{align}
Applying the discrete Gronwall lemma, we have
\begin{align}
\|\phi_{j,h}^M\|^2+&\|2\phi_{j,h}^M-\phi_{j,h}^{M-1}\|^2+\|\psi_{j,h}^M\|^2+\|2\psi_{j,h}^M-\psi_{j,h}^{M-1}\|^2+\alpha\Delta t\sum\limits_{n=2}^{M}\left(\|\nabla\phi_{j,h}^n\|^2+\|\nabla\psi_{j,h}^n\|^2\right)\nonumber\\&\le C \left(h^{2k}+(\Delta t)^4+(\nu-\nu_m)^2(1-\theta)^2\Delta t^2\right).
\end{align}
Using the triangular inequality allows us to write
\begin{align}
&\|e_{j,v}^M\|^2+\|e_{j,w}^M\|^2+\alpha\Delta t\sum\limits_{n=2}^{M}\left(\|\nabla e_{j,v}^n\|^2+\|\nabla e_{j,w}^n\|^2\right)\le 2\bigg(\|\phi_{j,h}^M\|^2+\|\psi_{j,h}^M\|^2\nonumber\\&+\alpha\Delta t\sum\limits_{n=2}^{M}\left(\|\nabla\phi_{j,h}^n\|^2+\|\nabla\psi_{j,h}^n\|^2\right)+\|\eta_{j,v}^M\|^2+\|\eta_{j,w}^M\|^2+\alpha\Delta t\sum\limits_{n=2}^{M}\left(\|\nabla\eta_{j,v}^n\|^2+\|\nabla\eta_{j,w}^n\|^2\right)\bigg)\nonumber\\&\le C \left(h^{2k}+(\Delta t)^4+(\nu-\nu_m)^2(1-\theta)^2\Delta t^2\right).
\end{align}

Now summing over $j$ and using the triangular inequality completes the proof.
\end{proof}
\section{\large Numerical experiments}
To test the proposed Algorithm \ref{Algn1} and theory, in this section we present results of numerical experiments. In order to compute the first timestep solutions $v_j^1$, and $w_j^1$, we use a first-order ensemble backward-Euler scheme \textcolor{black}{proposed} in \cite{MR17} \textcolor{black}{without the eddy viscosity term and} together with the initial conditions $v_j^0=v_j(0)$ and $w_j^0=w_j(0)$. Thus, for further time evolution, $v_j^0,\hspace{1mm}w_j^0,\hspace{1mm}v_j^1,\hspace{1mm}\text{and}\hspace{1mm}w_j^1$ are used as the two required initial conditions for the Algorithm \ref{Algn1}.

For \textcolor{black}{simulations} of MHD systems, it is considered important to enforce the solenoidal constraint $\nabla\cdot B=0$ in discrete level to the machine precision \cite{HMX17}. This is because the condition is induced by the induction equation \textcolor{black}{for all time if the initial magnetic field is divergence free \cite{mohebujjaman2020scalability}}, which is a precise physical law. Moreover, it has been shown that for MHD flow simulations $\nabla\cdot B\ne 0$ can produce large errors in the solution \cite{BB80}. Thus, the $((P_2)^2,P_1^{disc})$ SV element,  which is pointwise divergence-free on a barycenter refined regular triangular mesh, will be used for the velocity-pressure and magnetic field-magnetic pressure pairs throughout this section. We ran our simulations using the free finite element software Freefem++\cite{H12} with the triangular mesh. We used direct solver UMFPACK for all the simulations.

\subsection{Convergence rate verification} 
To verify the predicted convergence rates of our analysis in Section \ref{ErrorAnalysis}, we begin the experiment with a manufactured analytical solution,
\[
{v}=\left(\begin{array}{c} \cos y+(1+e^t)\sin y \\ \sin x+(1+e^t)\cos x \end{array} \right), \
{w}=\left(\begin{array}{c} \cos y-(1+e^t)\sin y \\ \sin x-(1+e^t)\cos x \end{array} \right), \ p =\sin(x+y)(1+e^t), \ \lambda=0,
\]
on the domain $\Omega = (0,1)^2$.
Next, we create four different true solutions from the above solution introducing a perturbation parameter $\epsilon$ as follows: 
\begin{align*}
    v_j:=\begin{cases} 
(1+(-1)^{j-1}\epsilon)v & 1\le j<3 \\
(1+(-1)^{j-1}2\epsilon)v & 3\le j\le 4
\end{cases},\hspace{2mm}\text{and}\hspace{2mm} w_j:=\begin{cases} 
(1+(-1)^{j-1}\epsilon)w & 1\le j<3 \\
(1+(-1)^{j-1}2\epsilon)w & 3\le j\le 4
\end{cases},
\end{align*}
where $j\in\mathbb{N}$. By construction, the averages $\bar{v_j}=v$ and $\bar{w_j}=w$. Using the above perturbed solutions, we compute right-hand side forcing terms. The Dirichlet boundary conditions are used on the boundary of the unit square.

The Algorithm \ref{Algn1} computes the discrete ensemble average $<v_h^n>$ and $<w_h^n>$, and these will be used to compare to the true ensemble average $<v(t^n)>$ and $<w(t^n)>$, respectively. We notate the ensemble average error as $<e_u>:=<u_h>^n-<u(t^n)>$.

For our choice of element, the theory predicts the $L^2(0, T;H^1(\Omega)^d)$ error to be of $O(h^2+\Delta t^2+(1-\theta)|\nu-\nu_m|\Delta t)$ provided $\Delta t<O(h^2)$. In this experiment, we consider $\nu=0.01$, and $\nu_m=0.001$ and compute the largest possible $\theta=1/9$, subject to the condition \textcolor{red}{in} \eqref{theta_con}. In this case, $\Delta t^2$ dominates over $(1-\theta)|\nu-\nu_m|\Delta t$, and thus the error behaves like $O(h^2+\Delta t^2)$. We consider three different choices $\epsilon=0.001,\text{ }0.01\text{ } \text{and}\text{ }0.1$ for the perturbation parameter.

To observe the temporal convergence, we choose a fixed \textcolor{black}{mesh width} $h=1/64$, end time $T=1$, and \textcolor{black}{compute with varying timestep sizes.} On the other hand, we use a small end time $T=0.001$, a fixed timestep size $\Delta t=T/8$, and \textcolor{black}{compute on successively refined meshes to observe the spatial convergence rate.} 

Tables \ref{tem_con}-\ref{tab4} exhibit errors and convergence rates for the variable $v$ \textcolor{black}{and $w$}, and we observe second order asymptotic temporal convergence rates and optimal spatial convergence rates for all choices of $\epsilon$. Note that we computed the convergence rates for both the variables and for all the choices of $\epsilon$ using both SV and weakly divergence free \textcolor{black}{Taylor Hood (TH)} elements. For this particular problem, we have found the same convergence behavior with both the SV and TH elements, and thus TH element results are omitted.

\begin{table}[!ht]
	\begin{center}
		\small\begin{tabular}{|c|c|c|c|c|c|c|}\hline
			\multicolumn{7}{|c|}{Temporal convergence (fixed $h=1/64$)}\\\hline
			\multicolumn{1}{|c|}{}&\multicolumn{2}{|c|}{$\epsilon=0.001$}
			& \multicolumn{2}{|c|}{$\epsilon=0.01$}& \multicolumn{2}{|c|}{$\epsilon=0.1$} \\ \hline
			$\Delta t$ & $\|<e_v>\|_{2,1}$ & rate   &$\|<e_v>\|_{2,1}$  & rate & $\|<e_v>\|_{2,1}$ & rate  \\ \hline
			$\frac{T}{4}$ & 2.8765e-1 &  & 2.8767e-1 &  & 2.9004e-1& \\ \hline
			$\frac{T}{8}$ & 8.4966e-2 & 1.76 & 8.4974e-2 &1.76  & 8.5986e-2  & 1.75        \\ \hline
			$\frac{T}{16}$& 2.3855e-2 & 1.83 & 2.3860e-2 & 1.83 & 2.4048e-2  & 1.84         \\ \hline
			$\frac{T}{32}$& 6.2895e-3 &1.92  & 6.2899e-3 &1.92  &  6.3445e-3 & 1.92   \\ \hline
			$\frac{T}{64}$&1.5801e-3  & 1.99 & 1.5801e-3 &1.99  &   1.5938e-3&  1.99\\ \hline
		\end{tabular}
	\end{center}
	\caption{\footnotesize Errors and  convergence rates for $v$ with $\theta=1/9$, $\nu=0.01$, and $\nu_m=0.001$.}\label{tem_con}
\end{table}
\begin{table}[!ht]
	\begin{center}
		\small\begin{tabular}{|c|c|c|c|c|c|c|}\hline
			\multicolumn{7}{|c|}{Spatial convergence (fixed $T=0.001$, $\Delta t=T/8$)}\\\hline
			\multicolumn{1}{|c|}{}&\multicolumn{2}{|c|}{$\epsilon=0.001$}
			& \multicolumn{2}{|c|}{$\epsilon=0.01$}& \multicolumn{2}{|c|}{$\epsilon=0.1$} \\ \hline
			$h$ & $\|<e_v>\|_{2,1}$ & rate   &$\|<e_v>\|_{2,1}$  & rate & $\|<e_v>\|_{2,1}$ & rate  \\ \hline
			 $\frac{1}{4}$ & 1.2071e-4 &  & 1.2071e-4 &  & 1.2071e-4& \\ \hline
			 $\frac{1}{8}$ &3.0380e-5  & 1.99 & 3.0380e-5  & 1.99 & 3.0382e-5   &  1.99       \\ \hline
			 $\frac{1}{16}$& 7.6186e-6 & 2.00 & 7.6186e-6 & 2.00 & 7.6197e-6 & 2.00         \\ \hline
			 $\frac{1}{32}$& 1.9144e-6 & 1.99 & 1.9144e-6 & 1.99 & 1.9151e-6  & 1.99   \\ \hline
			 $\frac{1}{64}$& 4.8147e-7 & 1.99 &4.8147e-7 & 1.99 & 4.8180e-7  & 1.99   \\ \hline
		\end{tabular}
	\end{center}
	\caption{\footnotesize Errors and  convergence rates for $v$ with $\theta=1/9$, $\nu=0.01$, and $\nu_m=0.001$.}\label{spac_con}
\end{table}

\begin{table}[!ht]
	\begin{center}
		\small\begin{tabular}{|c|c|c|c|c|c|c|}\hline
			\multicolumn{7}{|c|}{Spatial convergence (fixed $T=0.001$, $\Delta t=T/8$)}\\\hline
			\multicolumn{1}{|c|}{}&\multicolumn{2}{|c|}{$\epsilon=0.001$}
			& \multicolumn{2}{|c|}{$\epsilon=0.01$}& \multicolumn{2}{|c|}{$\epsilon=0.1$} \\ \hline
			$h$ & $\|<e_w>\|_{2,1}$ & rate   &$\|<e_w>\|_{2,1}$  & rate & $\|<e_w>\|_{2,1}$ & rate  \\ \hline
			   $\frac{1}{4}$ & 2.3107e-4 &  & 2.3107e-4 &  & 2.3108e-4& \\ \hline
			 $\frac{1}{8}$ & 5.7827e-5 & 2.00 & 5.7827e-5 & 2.00  & 5.7832e-5  & 2.00 \\ \hline
			 $\frac{1}{16}$& 1.4539e-5 & 1.99 & 1.4539e-5 & 1.99 & 1.4544e-5 & 1.99\\ \hline
			 $\frac{1}{32}$& 3.6966e-6 & 1.98 &3.6966e-6  & 1.98 & 3.7008e-6  & 1.97\\ \hline
			 $\frac{1}{64}$& 9.4949e-7 & 1.96 &9.4951e-7  & 1.96 &9.5174e-7  & 1.96 \\ \hline
		\end{tabular}
	\end{center}
	\caption{\footnotesize Errors and  convergence rates for $w$ with $\theta=1/9$, $\nu=0.01$, and $\nu_m=0.001$.}
\end{table}

\begin{table}[!ht]
	\begin{center}
		\small\begin{tabular}{|c|c|c|c|c|c|c|}\hline
			\multicolumn{7}{|c|}{Temporal convergence (fixed $h=1/64$)}\\\hline
			\multicolumn{1}{|c|}{}&\multicolumn{2}{|c|}{$\epsilon=0.001$}
			& \multicolumn{2}{|c|}{$\epsilon=0.01$}& \multicolumn{2}{|c|}{$\epsilon=0.1$} \\ \hline
			$\Delta t$ & $\|<e_w>\|_{2,1}$ & rate   &$\|<e_w>\|_{2,1}$  & rate & $\|<e_w>\|_{2,1}$ & rate  \\ \hline
			$\frac{T}{4}$ & 2.4694e-1 &  & 2.4694e-1 &  & 2.4787e-1&\\ \hline
			$\frac{T}{8}$ & 7.7109e-1 & 1.68 & 7.7111e-1 & 1.76 &  7.7527e-2& 1.68\\ \hline
			$\frac{T}{16}$& 2.2285e-2 &1.79  & 2.2286e-2 & 1.83 &  2.2455e-2 & 1.79\\ \hline
			$\frac{T}{32}$& 6.0150e-3 &1.89  & 6.0151e-3 &1.92  &  6.0531e-3 & 1.89\\ \hline
			$\frac{T}{64}$& 1.5350e-3 & 1.97 & 1.5350e-3 & 1.99 &  1.5440e-3& 1.97\\ \hline
		\end{tabular}
	\end{center}
	\caption{\footnotesize Errors and  convergence rates for $w$ with $\theta=1/9$, $\nu=0.01$, and $\nu_m=0.001$.}\label{tab4}
\end{table}

\subsection{\small MHD channel flow over a step}

Next, we consider a domain which is a $30\times 10$ rectangular channel with a $1\times 1$ step five units away from the inlet into the channel. No slip boundary condition is prescribed for the velocity and $B=<0,1>^T$  is enforced for the magnetic field on the walls. At the inflow, we set $u=<y(10-y)/25, 0>^T$ and $B=<0,1>^T$, and the outflow condition uses a channel of extension 10 units, and at the end of the extension, we set outflow velocity and magnetic field equal to the inflow. 

An ensemble of four different solutions corresponding to the perturbed initial conditions\\ $u_j(0):=\begin{cases} 
	(1+(-1)^{j-1}\epsilon)u_0 & 1\le j<3 \\
	(1+(-1)^{j-1}2\epsilon)u_0 & 3\le j\le 4
\end{cases}$ and $B_j(0):=\begin{cases} 
	(1+(-1)^{j-1}\epsilon)B_0 & 1\le j<3 \\
	(1+(-1)^{j-1}2\epsilon)B_0 & 3\le j\le 4
\end{cases}$ where, $j\in\mathbb{N}$, $u_0:=<y(10-y)/25, 0>^T$ and $B_0:=<0,1>^T$,  and a similar way perturbed inflow and outflow are considered. A \textcolor{black}{triangular unstructured} mesh of the domain that provides a total of \textcolor{black}{3316922} degrees of freedom (dof) is considered, where velocity $\text{dof}=1473898$, magnetic field $\text{dof}=1473898$, pressure $\text{dof}=184563$, \textcolor{black}{and magnetic pressure $\text{dof}=184563$}.  The simulations of the Algorithm \ref{Algn1} are done with various values of $\epsilon$ until $T=40$, with $s=0.001$, $\nu=0.001$, $\nu_m=0.01$, $\theta=1/9$, and timestep size $\Delta t=1$. \textcolor{black}{For the viscosity and magnetic diffusivity pair, we compute the largest possible $\theta$ so that the condition in \eqref{theta_con} holds.} Velocity and magnetic fields ensemble average solutions for varying $\epsilon$ and parameters are plotted in Figures \ref{vel_ens_sol}-\ref{mag_ens_sol}, and compare them to the usual MHD simulation (which is the $\epsilon=0$ case). \textcolor{black}{We observe that the ensemble average solutions appear to converge to the unperturbed solution as $\epsilon\rightarrow 0$, which is expected from our theory.}
\textcolor{black}{Though, in our analysis, a timestep restriction $\Delta t<O(h^2)$ appears due to the use of inverse inequality, in this numerical experiment, we could choose a larger timestep size and ran the simulation successfully for long time.}

\begin{figure}[h!]
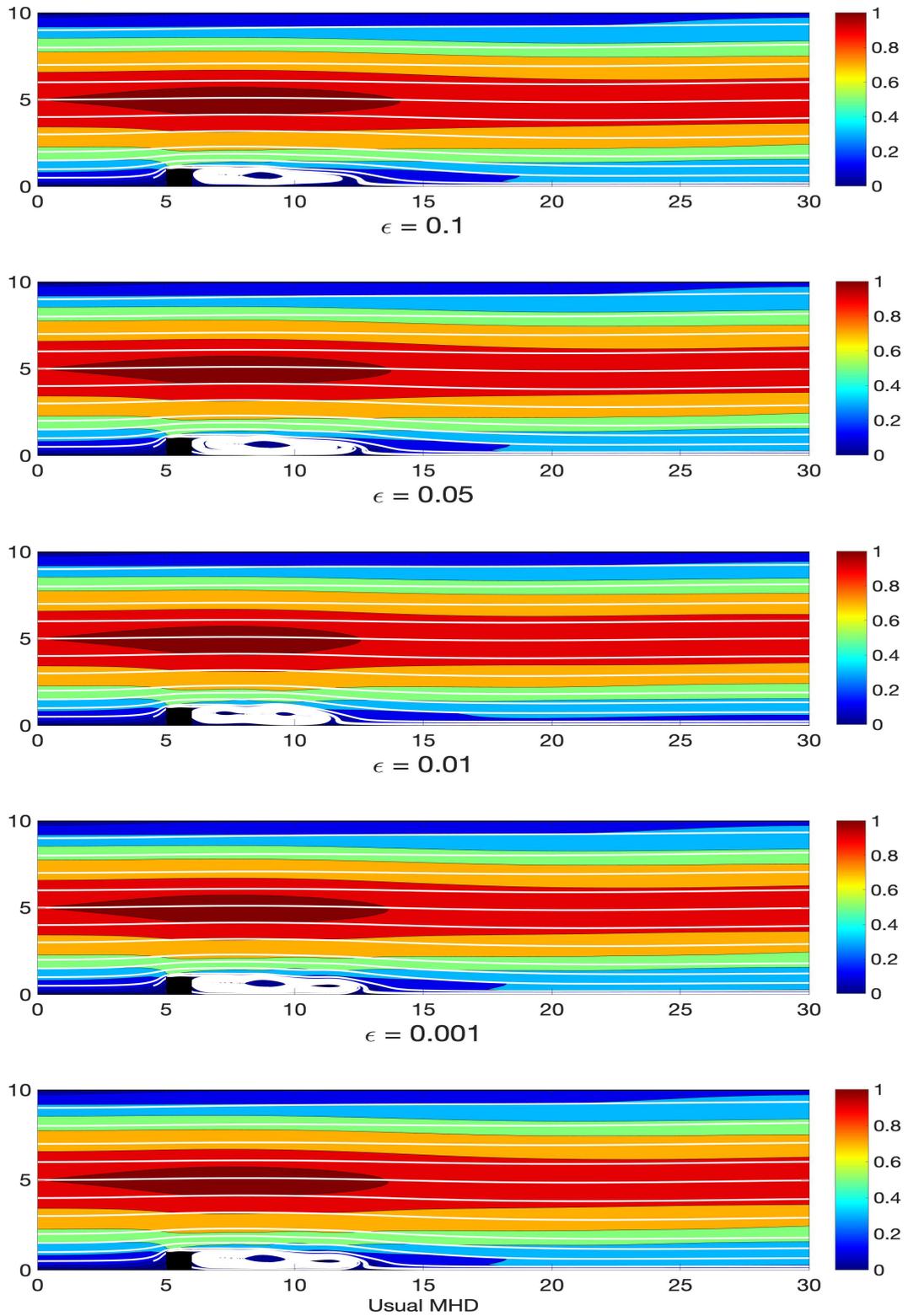

\begin{center} \vspace{-35mm}
            \includegraphics[width = 1\textwidth, height=0.5\textwidth,viewport=0 0 1400 890, clip]{U_s_001_e_0_1_nu_0_001_num_0_01_theta_1_9_df_M1_4dt_1.eps}\\\vspace{-37mm}
            \includegraphics[width = 1\textwidth, height=0.5\textwidth,viewport=0 0 1400 890, clip]{U_s_001_e_0_05_nu_0_001_num_0_01_theta_1_9_df_M1_4dt_1.eps}\\\vspace{-37mm}
            \includegraphics[width = 1\textwidth, height=0.5\textwidth,viewport=0 0 1400 890, clip]{U_s_001_e_0_01_nu_0_001_num_0_01_theta_1_9_df_M1_4dt_1.eps}\\\vspace{-37mm}
            \includegraphics[width = 1\textwidth, height=0.5\textwidth,viewport=0 0 1400 890, clip]{U_s_001_e_0_001_nu_0_001_num_0_01_theta_1_9_df_M1_4dt_1.eps}\\\vspace{-37mm}
            \includegraphics[width = 1\textwidth, height=0.5\textwidth,viewport=0 0 1400 890, clip]{U_s_001_e_0_0_nu_0_001_num_0_01_theta_1_9_df_M1_4dt_1.eps}
	 	 	\caption{The velocity ensemble solution (shown as streamlines over speed contour) at $T=40$  for MHD channel flow over a step with $\Delta t=1$, $s=0.001$, $\nu=0.001$, $\nu_m=0.01$, $\theta=1/9$, velocity $\text{dof}=1473898$, and pressure $\text{dof}=184563$.}\label{vel_ens_sol}
\end{center}
\end{figure}

\begin{figure}[h!]
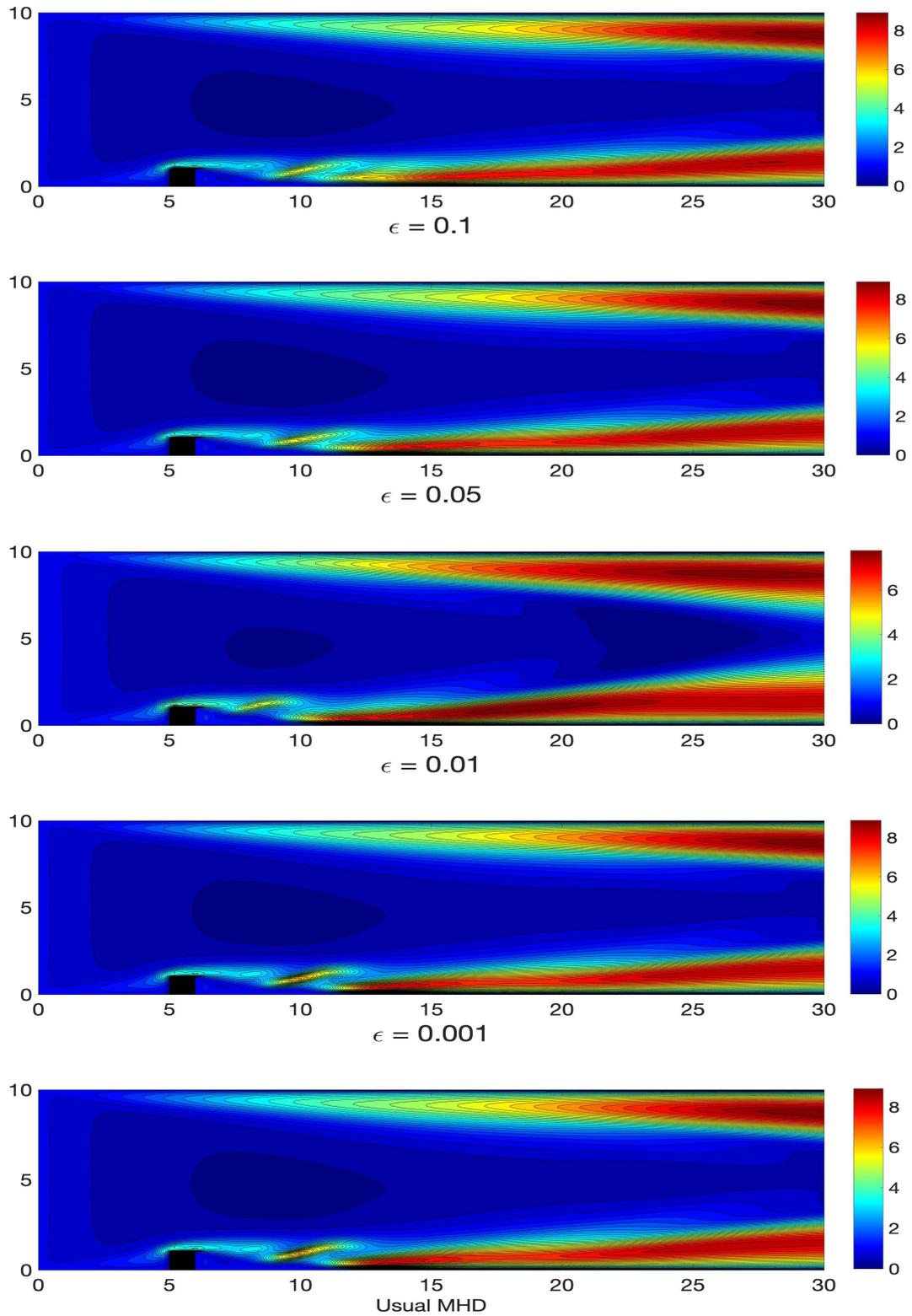

\begin{center} \vspace{-35mm}
            \includegraphics[width = 1\textwidth, height=0.5\textwidth,viewport=0 0 1400 890, clip]{B_s_001_e_0_1_nu_0_001_num_0_01_theta_1_9_df_M1_4dt_1.eps}\\\vspace{-37mm}
            \includegraphics[width = 1\textwidth, height=0.5\textwidth,viewport=0 0 1400 890, clip]{B_s_001_e_0_05_nu_0_001_num_0_01_theta_1_9_df_M1_4dt_1.eps}\\\vspace{-37mm}
            \includegraphics[width = 1\textwidth, height=0.5\textwidth,viewport=0 0 1400 890, clip]{B_s_001_e_0_01_nu_0_001_num_0_01_theta_1_9_df_M1_4dt_1.eps}\\\vspace{-37mm}
            \includegraphics[width = 1\textwidth, height=0.5\textwidth,viewport=0 0 1400 890, clip]{B_s_001_e_0_001_nu_0_001_num_0_01_theta_1_9_df_M1_4dt_1.eps}\\\vspace{-37mm}
            \includegraphics[width = 1\textwidth, height=.5\textwidth,viewport=0 0 1400 890, clip]{B_s_001_e_0_0_nu_0_001_num_0_01_theta_1_9_df_M1_4dt_1}
	 	 	\caption{The magnetic field ensemble solution (magnetic field strength) at $T=40$ for MHD channel flow over a step with $\Delta t=1$, $s=0.001$, $\nu=0.001$, $\nu_m=0.01$, $\theta=1/9$, magnetic field $dof=1473898$, and magnetic pressure $dof=184563$.}\label{mag_ens_sol}
\end{center}
\end{figure}

\clearpage

\section{Conclusion and future works}

In this paper, we proposed, analyzed, and tested \textcolor{black}{a second order in time in practice, optimally accurate in space, decoupled and} efficient algorithm for MHD flow ensemble \textcolor{black}{computations}. \textcolor{black}{The second order temporal accuracy in practice} is a major improvement to the first order ensemble average algorithm proposed in \cite{MR17}. \textcolor{black}{The algorithm extends the breakthrough idea for efficient computation of flow ensemble for Navier-Stokes \cite{JL14} to MHD and combines with the breakthrough idea of Trenchea \cite{T14} to \textcolor{black}{construct} a decoupled stable scheme in terms of Els\"asser variables.} 
The key features to the efficiency of the algorithm are: (i) It is a stable decoupled method, split into two Oseen problems, \textcolor{black}{which are identical to assembled,} much easier to solve and can be solved simultaneously.  (ii) At each time step, all $J$ different linear systems share the same coefficient matrix, as a result, the storage requirement is reduced, a single assembly of the coefficient matrix is required instead of $J$ times, preconditioners need to be built once and can be reused. (iii) \textcolor{black}{It can take the advantage of the use of a block linear solver. (iv) No data restrictions are needed to avoid instability due to certain viscous terms.}
We proved the stability and second order convergence of the algorithm with respect to the timestep size. Numerical experiments were done on a unit square with a manufactured solution that verified the predicted convergence rates. Finally, we applied our scheme on a benchmark channel flow over a step problem and showed the method performed well. \textcolor{black}{Though, in our analysis, a timestep restriction appears which was not observed in the numerical experiments.}

In this paper, we considered the flow ensemble subject to the \textcolor{black}{slightly} different initial conditions and \textcolor{black}{forcing} functions, we plan to investigate flow ensemble behavior where the viscosities, and the boundary conditions involve uncertainties. \textcolor{black}{In the numerical experiments, no timestep restriction was observed, and thus further investigation is needed to find unconditional stability of the scheme.} \textcolor{black}{The recent idea \cite{gardner2020continuous} of  a continuous data assimilation algorithm for a velocity-vorticity formulation of the Navier-Stokes equations can be applied to the MHD flow ensemble.} To reduce the computational cost, reduced order modeling (ROM) for the ensemble MHD flow computation will be the future research avenue. Recently, it has been shown the data-driven filtered ROM for flow problem \cite{XMRI17} works well for the complex system. To reduced computation cost further to simulate an ensemble MHD system as well as more accurate results, it is worth exploring in ROM with physically accurate data \cite{MRI18}. We also plan to apply the recent advances \cite{gunzburger2019evolve} of an evolve-filter-relax based stabilization of ROMs for uncertainty quantification of the MHD flow ensemble using stochastic collocation method.

\textcolor{black}{The finite element simulations of the Maxwell equations using nodal based elements often produce cancellation errors, interface problems \cite{albanese1993analysis}, and spurious modes \cite{bossavit1990solving,sun1995spurious} that cause unwanted and unphysical solutions. For magnetic field, the tangential components are continuous across inter-element boundaries but it is not necessary for the normal components. By the nature of finite-element interpolants, some nodal vectorial elements enforce the continuity of the normal component across the interface which is not required by the physics. It is now well established that the edge elements, are more appropriate for the finite element discretization of the Maxwell equations \cite{albanese1997finite, beck1999hierarchical, Bossavit1988rationale}, which only enforces the tangential continuity of the magnetic field across the interfaces. MHD flow ensemble simulations with N\'ed\'elec's edge element \cite{nedelec1980mixed} will be the next research direction.
} 
\\\\
\textbf{Acknowledgment.} The author thanks Dr. Leo G. Rebholz for his constructive comments and suggestions that greatly improved the manuscript.

\bibliographystyle{plain}
\bibliography{High_order_MHD_ensemble}
\end{document}